\newtheorem{theorem}{Theorem}
\newtheorem{definition}{Definition}
\newtheorem{lemma}[theorem]{Lemma}
\newcommand{\junk}[1]{}
\newcommand{\R}{{\mathbb{R}}}
\newcommand{\N}{{\mathbb{N}}}
\newcommand{\Z}{{\mathbb{Z}}}
\newcommand{\E}{\mathbb{E}}
\renewcommand{\Pr}{\mathbb{P}}
\newcommand\eps{\varepsilon}
\newcommand\FF{\mathcal{F}}
\newcommand\HH{\mathcal{H}}
\newcommand\TT{\mathcal{T}}
\renewcommand\AA{\mathcal{A}}
\newcommand{\eqdef}{:=}
\newcommand{\maps}{\colon}
\DeclareMathOperator{\herdisc}{herdisc}
\DeclareMathOperator{\disc}{disc}
\DeclareMathOperator{\rank}{rank}
\DeclareMathOperator{\tr}{tr}
\begin{document}
\title{Tighter Bounds for the Discrepancy of Boxes and Polytopes}
\author{Aleksandar Nikolov\\
  Department of Computer Science\\
  University of Toronto\\
  \url{anikolov@cs.toronto.edu}}
\date{}

\maketitle

\begin{abstract}
  Combinatorial discrepancy is a complexity measure of a collection of sets which quantifies how well the sets in the collection can be simultaneously balanced. More precisely, we are given an n-point set $P$, and a collection $\FF = \{F_1, ..., F_m\}$ of subsets of $P$, and our goal is color $P$ with two colors, red and blue, so that the maximum over the $F_i$ of the absolute difference between the number of red elements and the number of blue elements (the discrepancy) is minimized. Combinatorial discrepancy has many applications in mathematics and computer science, including constructions of uniformly distributed point sets, and lower bounds for data structures and private data analysis algorithms.  

We investigate the combinatorial discrepancy of geometrically defined systems, in which $P$ is an n-point set in $d$-dimensional space ,and $\FF$ is the collection of subsets of $P$ induced by dilations and translations of a fixed convex polytope $B$. Such set systems include systems of sets induced by axis-aligned boxes, whose discrepancy is the subject of the well known Tusn\'ady problem. We prove new discrepancy upper and lower bounds for such set systems by extending the approach based on factorization norms previously used by the author and Matou\v{s}ek. We improve the best known upper bound for the Tusn\'ady problem by a logarithmic factor, using a result of Banaszczyk on signed series of vectors. We extend this improvement to any arbitrary convex polytope $B$ by using a decomposition due to Matou\v{s}ek. Using Fourier analytic techniques, we also prove a nearly matching discrepancy lower bound for sets induced by any fixed bounded polytope $B$ satisfying a certain technical condition.

We also outline applications of our results to geometric discrepancy, data structure lower bounds, and differential privacy.
\end{abstract}

\section{Introduction} 

As usual, we define the combinatorial discrepancy of a system $\FF$ of
subsets of some finite set $P$ as:
\[
\disc\FF \eqdef \min_{\chi\maps P \to \{-1, 1\}}\max_{F \in \FF}{ \sum_{p \in
    F}{\chi(p)}  }.
\]
The book of Matou\v{s}ek~\cite{Matousek-book} provides more background
on combinatorial and geometric discrepancy. For a reference to the
large number of applications of combinatorial discrepancy to geometric
discrepancy, numerical methods, and computer science, see the book of
Chazelle~\cite{Chazelle-book}. A recent application to private data
analysis is described in the author's PhD thesis~\cite{thesis}. We
give more details on applications to geometric discrepancy, numerical
integration, data structures, and differential privacy in
Section~\ref{sect:apps}.

Let $\AA_d$ be the family of \emph{anchored} axis-aligned boxes in
$\R^d$: $\AA_d \eqdef \{A(x): x \in \R^d\}$, where $A(x) \eqdef \{y
\in \R^d: y_i \le x_i\ \forall i \in [d]\}$ and $[d] \eqdef \{1,
\ldots, d\}$. This is a slight abuse of terminology: $A(x)$ is not a
box, but rather a polyhedral cone. Usually $A(x)$ is defined to be
anchored at $0$, i.e.~it is defined as the set $\{y \in \R^d: 0 \le
y_i \le x_i\ \forall i \in [d]\}$. However, we prefer to ``anchor''
$A(x)$ at infinity. This convention does not affect any of the results
in the paper, and allows us to avoid some minor technicalities.

For an $n$-point set $P \subset \R^d$, we denote by $\AA_d(P) \eqdef
\{A(x) \cap P: x \in \R^d\}$ the set system induced by anchored boxes
on $P$. (Note that this set system is finite, and, in fact, can have
at most $n^d$ sets.)  Tusn\'ady's problem asks for tight bounds on the
largest possible combinatorial discrepancy of $\AA_d(P)$ over all
$n$-point sets $P$, i.e.~for the order of growth of the function
$\disc(n,\AA_d) \eqdef \sup\{\disc \AA_d(P): P \subset \R^d, |P| =
n\}$ with $n$.  The best known upper bound for the Tusn\'ady problem
is $\disc(n, \AA_d) = O_d(\log^d n)$, and was recently proved by
Bansal and Garg~\cite{BansalG16}.  Their result improved on the prior
work of Larsen~\cite{larsen14} (see also the proof
in~\cite{disc-gamma2}), who showed that $\disc(n, \AA_d) =
O_d(\log^{d+1/2} n)$. Here, and in the rest of this paper, we use the
notation $O_p(\cdot), \Omega_p(\cdot)$, $\Theta_p(\cdot)$ to denote
the fact that the implicit constant in the asymptotic notation depends
on the parameter $p$.

In our first result, we improve the upper bounds above:

\begin{theorem}\label{thm:tusnady-ub}
  For any $d \ge 2$, 
  \[
  \disc(n,\AA_d) = O_d(\log^{d-1/2} n).
  \]
\end{theorem}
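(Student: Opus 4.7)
The strategy is to combine a factorization-norm decomposition of the anchored box incidence matrix with Banaszczyk's theorem on signed series of vectors, extending the approach of the author with Matou\v{s}ek. First I perform the standard combinatorial reduction: after a coordinate-wise rank transformation we may assume $P \subset [n]^d$ with distinct coordinates in each axis, so the discrepancy task is to bound $\min_{\chi \in \{\pm 1\}^n} \|A\chi\|_\infty$ for the $N \times n$ incidence matrix $A$ of combinatorial anchored boxes, where $N \le (n+1)^d$. The key structural observation is that $A$ is (up to permutation of rows and columns and deletion of columns) a submatrix of the $d$-fold tensor power $A_1^{\otimes d}$, where $A_1$ is the $n \times n$ lower triangular all-ones matrix.

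Next I produce a factorization $A = UV$. Starting from a $\gamma_2$-factorization $A_1 = U_1 V_1$ coming from the canonical dyadic-interval decomposition (which gives $\|U_1\|_{2\to\infty} \cdot \|V_1\|_{1\to 2} = O(\log n)$, and can be balanced to $\|U_1\|_{2\to\infty} = \|V_1\|_{1\to 2} = O(\sqrt{\log n})$), I tensor to obtain a factorization of $A_1^{\otimes d}$ with row norms and column norms bounded by $O(\log^{d/2} n)$. Restricting to the relevant rows and the $n$ columns indexed by $P$ yields the factorization $A = UV$ that I will feed into Banaszczyk.

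Then I apply Banaszczyk's theorem: given unit vectors $v_1,\ldots,v_n$ and a symmetric convex body $K$ of standard Gaussian measure at least $1/2$, there are signs $\chi \in \{\pm 1\}^n$ with $\sum \chi_i v_i \in cK$. I take the $v_i$ to be the columns of $V$ normalized by $\|V\|_{1\to 2}$, and choose $K = \{z : \|Uz\|_\infty \le T\}$, where $T$ is calibrated so that $\Pr_g[\|Ug\|_\infty \le T] \ge 1/2$ for a standard Gaussian $g$. Banaszczyk then delivers a signing with $\|A\chi\|_\infty = \|UV\chi\|_\infty \le c \cdot \|V\|_{1\to 2} \cdot T$, which I aim to bound by $O(\log^{d-1/2} n)$.

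The main obstacle is extracting the claimed bound rather than the Larsen-type $O(\log^{d+1/2} n)$ that comes out of plugging in the factorization and a naive union-bound estimate $T \le O(R\sqrt{\log N})$ with $R = \|U\|_{2\to\infty}$. Closing the $\log n$ gap requires either (i) a sharper choice of the target body $K$ that exploits the tensor product structure of $U = U_1^{\otimes d}$, so that its Gaussian measure is large at a scale smaller than the naive one, or (ii) a more refined factorization adapted to the sparsity of the column set (only $n$ columns of $A_1^{\otimes d}$ are actually present, not $n^d$), so that the effective $\gamma_2$ of $A$ is $O(\log^{d-1} n)$ rather than $O(\log^d n)$. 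I expect the proof to rely crucially on one of these two savings, with the restriction $d \ge 2$ entering precisely because the improvement requires at least two dimensions across which the tensor/restriction structure can be leveraged (for $d=1$ the claimed exponent $d - 1/2 = 1/2$ is strictly below the true discrepancy of the prefix matrix).
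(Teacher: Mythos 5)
Your proposal, as executed, only yields the Larsen-type bound $O_d(\log^{d+1/2}n)$, and you acknowledge this yourself: the final step is left as a hope that either (i) a cleverer body $K$ or (ii) a refined factorization with $\gamma_2 = O(\log^{d-1}n)$ will save the missing $\log n$ factor. Neither speculation can be made to work as stated. For (ii): the incidence matrix of $\AA_d(P)$ for an $n$-point set genuinely has $\gamma_2 = \Theta_d(\log^d n)$ (take $P$ to be an $n^{1/d}\times\cdots\times n^{1/d}$ grid; the lower bound of \cite{e8-tusnady,disc-gamma2} together with \eqref{eq:dual-simple} gives $\gamma_2 = \Omega_d(\log^d n)$), so the sparsity of the column set does not reduce the factorization norm. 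For (i): the $\sqrt{\log N}$ in your estimate of $T$ is only $O_d(\sqrt{\log n})$ since $N\le (n+1)^d$, so the union bound is not where the extra $\log n$ is lost; it is lost in using the full $d$-dimensional factorization, and no choice of $K$ built from $U=U_1^{\otimes d}$ recovers it, since $\disc \le O(\gamma_2)$ without any logarithmic loss is not something Banaszczyk's theorem gives you.

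The idea you are missing is how the paper gets one dimension ``for free'': it does \emph{not} factor the $d$-dimensional system at all. Order the points $p_1,\ldots,p_n$ by their last coordinate and write $p_j=(q_j,r_j)$ with $q_j\in\R^{d-1}$. Every set $A(x)\cap P$ is of the form $\{p_j : q_j\in A_i,\ j\le k\}$, i.e.\ a $(d-1)$-dimensional anchored box on the projections intersected with a \emph{prefix} in this ordering. One then takes the factorization of $\AA_{d-1}(Q)$ only (Lemma~\ref{lm:factor} in dimension $d-1$, cost $\|u_i\|_2\le C_d(1+\log n)^{d-1}$, $\|v_j\|_2\le 1$) and applies not the total-sum form of Banaszczyk's theorem but the signed-series/prefix form (Lemma~\ref{lm:bana-steinitz}), which controls all prefix sums $\sum_{j\le k}\chi(j)v_j$ simultaneously at the price of requiring Gaussian measure $1-1/(2n)$ for $K$ instead of $1/2$. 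The body $K=\{x:|\langle u_i,x\rangle|\le C'_d(1+\log n)^{d-1/2}\ \forall i\}$ then has the required measure by a Bernstein-type tail bound and a union bound over $m\le n^d$ slabs, and the prefix structure absorbs the last coordinate, giving $\log^{d-1}n\cdot\sqrt{\log n}=\log^{d-1/2}n$. Your plan never invokes the prefix version of Banaszczyk's theorem nor the ordering by the last coordinate, and without that ingredient the approach you describe cannot reach the claimed exponent.
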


It was shown in~\cite{e8-tusnady,disc-gamma2} that $\disc(n,\AA_d) =
\Omega_d(\log^{d-1} n)$, so the upper bound above is within a
$O_d(\sqrt{\log n})$ factor from the lower bound. This brings us
tantalizingly close to a full resolution of Tusn\'ady's problem. 

More generally, let $\FF$ be a collection of subsets of $\R^d$, and,
for an $n$-point set $P \subset \R^d$, let $\FF(P)$ be the set system
induced by $\FF$ on $P$, i.e.~$\FF(P) \eqdef \{F \cap P: F \in
\FF\}$. We are interested in how the worst-case combinatorial discrepancy
of such set systems grows with $n$. This is captured by the
function $\disc(n, \FF) \eqdef \sup\{\disc\FF(P): P \subset \R^d, |P|
= n\}$. 

Let $K\subseteq \R^d$, and let's define $\TT_K$ to be the family of
images of $K$ under translations and homothetic transformations:
$\TT_K \eqdef \{tK + x: t \in \R_{+}, x \in \R^d\}$, where $\R_{+}$ is
the set of positive reals. If we take $Q \eqdef [0, 1]^d$, then it is
well-known that $\disc(n, \TT_Q) \le 2^d\disc(n, \AA_d)$ and,
therefore, Theorem~\ref{thm:tusnady-ub} implies $\disc(n,
\TT_Q) = O_d(\log^{d-1/2} n)$. Our next result shows that this bound
holds for any polytope $B$ in $\R^d$.

\begin{theorem}\label{thm:polytopes-ub}
  For any $d \ge 2$, and any closed convex polytope $B \subset \R^d$,
  \[
  \disc(n, \TT_B) = O_{d,B}(\log^{d-1/2} n).
  \]
\end{theorem}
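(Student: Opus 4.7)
The plan is to reduce Theorem~\ref{thm:polytopes-ub} to Theorem~\ref{thm:tusnady-ub} via a decomposition of the polytope $B$ into simpler pieces. The key fact, which is essentially due to Matou\v{s}ek, is that one can write
\[
\mathbf{1}_B = \sum_{i=1}^{k} \epsilon_i \mathbf{1}_{C_i},
\]
where each $\epsilon_i \in \{-1,+1\}$, each $C_i$ is a simplicial cone (a translate of a nonsingular linear image of the nonnegative orthant $\R_{\ge 0}^d$), and $k = k(B,d)$ depends only on $B$ and $d$. One way to obtain such a decomposition is to triangulate $B$ into simplices and then apply the Brianchon--Gram identity (or its simplex-specific variants) to express each simplex as an alternating sum of its vertex cones, all of which are simplicial. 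Because the identity is a statement about indicator functions, it is preserved under homotheties: for every $F = tB + x \in \TT_B$,
\[
\mathbf{1}_F = \sum_{i=1}^{k} \epsilon_i \mathbf{1}_{tC_i + x},
\]
so any coloring $\chi$ of a point set $P$ satisfies $|\chi(F\cap P)| \le \sum_{i=1}^{k} |\chi((tC_i+x)\cap P)|$.

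I would then observe that each $C_i$ can be mapped to the standard anchored orthant by an affine change of coordinates. Under this map, $\TT_{C_i}(P)$ becomes, up to a reflection, an anchored-box set system $\AA_d(P_i)$ on a linearly transformed copy $P_i$ of $P$. The problem thus reduces to finding a single coloring of $P$ that is simultaneously good for all $k$ rotated anchored-box systems $\TT_{C_1}(P), \ldots, \TT_{C_k}(P)$.

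To obtain such a coloring, I would open up the proof of Theorem~\ref{thm:tusnady-ub}, which controls $\disc(n,\AA_d)$ by first bounding a factorization norm of the incidence matrix (in the spirit of the earlier $\gamma_2$-based work of Matou\v{s}ek and the author) and then feeding the result into Banaszczyk's vector balancing theorem. Such factorization norms are sub-additive under horizontal concatenation of matrices, so the combined incidence matrix of $\TT_{C_1}(P) \cup \cdots \cup \TT_{C_k}(P)$ has factorization norm at most $k$ times that of a single $\AA_d$-system, which is $O_d(\log^{d-1} n)$. Banaszczyk's theorem then produces a single coloring of $P$ that is good for all $k$ systems at once, with total discrepancy $O_{d,B}(\log^{d-1/2}n)$; combining with the decomposition above yields the claimed bound on $\disc\TT_B(P)$.

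The main obstacle is that Theorem~\ref{thm:tusnady-ub} cannot be invoked as a black box: we need a \emph{single} coloring controlling a union of $k$ rotated anchored-box systems, whereas the theorem only promises separate optimal colorings. This forces the argument to be carried out at the level of the factorization norm used in the proof, and to verify that this norm genuinely is sub-additive in the required sense under the concatenation of the $k$ rotated systems. Once this is done, the factor of $k = k(B,d)$ is absorbed into the implicit constant $O_{d,B}(\cdot)$.
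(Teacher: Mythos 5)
Your overall architecture is the same as the paper's: decompose $B$ into finitely many ``corners'' (anchored cones), bound a factorization norm of the union of the resulting corner systems, and feed a suitable factorization into Banaszczyk's signed-series lemma to save the last $\sqrt{\log n}$. The signed indicator decomposition via triangulation and Brianchon--Gram/Lawrence--Varchenko plays the role of the paper's Lemma~\ref{lm:decomp} (Matou\v{s}ek's decomposition into admissible compositions of corner systems $\AA_{W_1},\dots,\AA_{W_k}$), and your inequality $|\chi(F\cap P)|\le\sum_i|\chi((tC_i+x)\cap P)|$ plays the role of \eqref{eq:combin}.

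However, there is a genuine gap at the Banaszczyk step. Lemma~\ref{lm:bana-steinitz} controls \emph{prefix} sums with respect to a single ordering of the points; in the proof of Theorem~\ref{thm:tusnady-ub} the $\sqrt{\log n}$ saving comes from splitting off one coordinate direction, ordering $P$ along it, factorizing only the remaining $(d-1)$-dimensional system (which is what has $\gamma_2 = O_d(\log^{d-1}n)$ --- the full $d$-dimensional system only satisfies $\gamma_2 = O_d(\log^{d}n)$, cf.\ Lemma~\ref{lm:factor}), and observing that every anchored box intersected with $P$ is a prefix of that ordering intersected with a set from the reduced system. To run this simultaneously for your $k$ cone systems you need all cones to share a common bounding direction, so that one ordering of $P$ makes every set in every system a ``prefix intersection.'' This is exactly the ``moreover'' clause of Lemma~\ref{lm:decomp} ($e_1\in W_1\cap\dots\cap W_k$), which the paper points out is not automatic and must be checked for Matou\v{s}ek's recursive construction. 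A decomposition obtained by triangulating $B$ and taking vertex cones has no reason to have a common facet normal among all the cones, and your proposal neither imposes nor verifies such a condition. Without it, your route only lets you apply the non-prefix vector-balancing bound (or \eqref{eq:ub}) to the full $d$-dimensional union, whose factorization norm is $O_{d,B}(\log^{d}n)$, yielding $O_{d,B}(\log^{d+1/2}n)$ rather than $O_{d,B}(\log^{d-1/2}n)$. So the missing idea is precisely to choose the cone decomposition so that all cones share one direction (or to adopt Matou\v{s}ek's decomposition, which does), order the points along that direction, and apply Banaszczyk to the factorization of the union of the $(d-1)$-dimensional reduced systems.
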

With the same proof we can
establish the stronger fact that $\disc(n, \mathrm{POL}(\HH)) =
O_{d, \HH}(\log^{d-1/2}n)$, where  $\HH$ is a family of hyperplanes in
$\R^d$, and $\mathrm{POL}(\HH)$ is the set of all polytopes which can
be written as $\bigcap_{i = 1}^m{H_i}$, with each $H_i$ a
halfspace whose boundary is parallel to some $h \in \HH$. The best
previously known upper bound in this setting is also due to Bansal and
Garg~\cite{BansalG16}, and is equal to $O_{d,\HH}(\log^d n)$. 

Our final result is a nearly matching lower bound for any
generic convex polytope $B$. (See
Definition~\ref{defn:non-degenerate} for the meaning of
``generic''.)
\begin{theorem}\label{thm:lb-main}
  Let $B\subseteq \R^d$ be a generic convex polytope with
  non-empty interior. Then $\disc(n,\TT_B) =
  \Omega_{d,B}(\log^{d-1}n)$.
\end{theorem}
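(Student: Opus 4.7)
The plan is to extend the Fourier analytic lower bound method used for axis-aligned boxes to arbitrary generic polytopes, with the Fourier transform of $\mathbf{1}_B$ playing the role of the tensor-product structure of the cube. Fix the test $n$-point set to be a canonical lattice, say $P = \frac{1}{N}\Z^d \cap [0,1)^d$ with $N = \lfloor n^{1/d}\rfloor$. For any coloring $\chi \maps P \to \{-1, 1\}$, form the signed measure $\mu_\chi \eqdef \sum_{p \in P} \chi(p)\,\delta_p$ and the discrepancy function $D(x,t) \eqdef \mu_\chi(tB + x)$. The goal is to prove $\sup_{(x,t)}|D(x,t)| = \Omega_{d,B}(\log^{d-1} n)$, uniformly in $\chi$.

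The first step is a standard $L^\infty$-to-$L^2$ reduction. Averaging $|D(x,t)|^2$ over $x$ in a box containing the support of $D$ and $t$ in a dyadic window $[T_0,2T_0]$, and applying Plancherel in $x$, gives
\[
\int |D(x,t)|^2\, dx \;=\; \int |\hat{\mu}_\chi(\xi)|^2 \, t^{2d}\,|\hat{\mathbf{1}}_B(t\xi)|^2 \, d\xi,
\]
and integrating over $t$ reduces the problem to estimating $\int |\hat\mu_\chi(\xi)|^2 \Phi_B(\xi)\,d\xi$ where $\Phi_B(\xi) \eqdef \int_{T_0}^{2T_0} t^{2d}|\hat{\mathbf{1}}_B(t\xi)|^2\,dt$. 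Because $\mu_\chi$ is supported on a lattice, $\hat\mu_\chi$ is periodic with a discrete Parseval identity of total mass $n$; it therefore suffices to show that $\Phi_B$ is large on a set of frequencies rich enough to absorb a constant fraction of this Fourier mass. A plain Cauchy-Schwarz here would yield only $\log^{(d-1)/2} n$, matching the Roth $L^2$ lower bound; the combinatorial $\log^{d-1} n$ bound is obtained by a more delicate test-function/dyadic-product refinement, in the spirit of the Halász-type arguments for axis-aligned boxes and the $\gamma_2$-factorization analysis of \cite{disc-gamma2}, both of which should transport to the Fourier side once $\Phi_B$ is understood.

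The second, and substantially harder, step is to control $\hat{\mathbf{1}}_B$ for a generic polytope. I would use a Brion--Vergne style decomposition which writes $\hat{\mathbf{1}}_B(\eta)$ as a sum over the vertices of $B$ of rational-trigonometric terms, each with denominators that are products of linear forms in the facet normals meeting at the vertex. The genericity hypothesis (as formulated in the forthcoming Definition~\ref{defn:non-degenerate}) should preclude the algebraic coincidences between facet normals at different vertices that would otherwise create cancellations, yielding $|\hat{\mathbf{1}}_B(\eta)| \age \prod_{i=1}^{d} 1/|\langle \eta, n_i \rangle|$ along $d$-dimensional tubes near each choice of $d$ facet normals at a vertex. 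After scale-averaging, $\Phi_B(\xi)$ satisfies a matching product-form lower bound on a ``good'' set $G$ of frequencies whose cardinality in the shell $|\xi| \asymp N$ is $\Omega(N^d \log^{d-1} N)$; combining this with the Fourier mass bound on $\hat\mu_\chi$ produces the $\log^{d-1} n$ factor. The main obstacle is exactly this Fourier estimate: tracking Brion--Vergne cancellations quantitatively, and extracting from the genericity hypothesis a good frequency set of the correct density, is the technical heart of the argument.
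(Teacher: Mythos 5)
Your plan stalls exactly at the point where the actual difficulty lies: the passage from an $L^2$/Plancherel setup to the bound $\Omega(\log^{d-1}n)$. As you note yourself, averaging $|D(x,t)|^2$ and applying Cauchy--Schwarz can only reproduce the Roth/Drmota-type bound $\Omega(\log^{(d-1)/2}n)$, and the proposal's only suggestion for closing the quadratic gap is that ``Hal\'asz-type arguments and the $\gamma_2$-factorization analysis of \cite{disc-gamma2} should transport to the Fourier side.'' That is precisely what does not work: the combinatorial $\log^{d-1}n$ lower bound for boxes in \cite{e8-tusnady,disc-gamma2} relies crucially on the product structure of $\AA_d$, which a general polytope $B$ lacks, so there is no argument to ``transport.'' The mechanism that actually beats the $L^2$ barrier here is different in kind: one lower-bounds $\gamma_2$ via the trace norm, $\gamma_2(A)\ge \frac1n\|A\|_{\tr}$, applied to a \emph{convolution} matrix $M(B,n)$ built from the periodized indicator of $B$ on a grid. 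Because $M(B,n)$ is normal and diagonalized by the discrete Fourier basis, its trace norm is the $\ell_1$ sum of the absolute values of the discrete Fourier coefficients of $f_B$; an $L^1$ (first-moment) spherical-average lower bound on $\hat f_B$ for generic polytopes \cite{BrandoliniCT97}, combined with an averaging over rotations of $B$ and a quantitative comparison between discrete and continuous Fourier coefficients, gives $\gamma_2(\TT_B(P))=\Omega_{d,B}(\log^d n)$, and the relation $\herdisc\FF \age \gamma_2(\FF)/\log\rank$ then yields $\disc(n,\TT_B)=\Omega_{d,B}(\log^{d-1}n)$. This route also sidesteps your adversarial-coloring problem: in your formulation the coloring $\chi$ chooses where $\hat\mu_\chi$ places its Fourier mass, so ``$\Phi_B$ large on a rich set of frequencies'' does not suffice unless the good set is essentially all of each shell; the trace-norm argument quantifies over all colorings automatically.

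The second step of your plan is also on shaky ground. A Brion--Vergne vertex expansion does not yield pointwise lower bounds of the form $|\hat{\mathbf{1}}_B(\eta)|\age \prod_i |\langle\eta,n_i\rangle|^{-1}$ on tubes: the vertex terms oscillate against each other, and controlling where the cancellation occurs is exactly the content of the (average, not pointwise) estimate of \cite{BrandoliniCT97}; indeed, for the cube such lower bounds fail for whole families of radii, which is why some genericity hypothesis and an averaging over directions (or rotations) is needed at all. Your frequency count is also inconsistent: a shell $|\xi|\asymp N$ contains only $O(N^d)$ lattice points, so a good set of size $\Omega(N^d\log^{d-1}N)$ inside it cannot exist; the $\log^{d-1}$ gain must come from summing a decaying weight of order $\log^{d-1}(\|\xi\|)/\|\xi\|^{d}$ over all frequencies up to the grid scale, not from the cardinality of a single shell. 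Finally, since the argument must ultimately be run on a finite grid (the matrix, or the point set, is finite), you need a quantitative bound on how fast discrete Fourier coefficients of $f_B$ converge to the continuous ones -- a step entirely missing from the proposal, and one that requires its own approximation argument (a Jackson-type theorem applied to a continuous surrogate of $\mathbf{1}_B$).
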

The best previously known lower bound on $\disc(n,\TT_B)$ was
$\Omega_{d,B}(\log^{(d-1)/2}n)$, which follows from a result of
Drmota~\cite{Drmota96} in geometric discrepancy theory. This lower
bound holds for any convex polytope $B$, but is quadratically weaker
than our lower bound.

We conjecture that the genericity condition in
Theorem~\ref{thm:lb-main} is not necessary, and, furthermore, that the
asymptotic constant in the lower bound need not depend on $B$. These
conjectures would be implied by certain estimates on the Fourier
spectrum of convex polytopes: see Section~\ref{sect:lb} for more
details. By contrast, the asymptotic constant in the upper bound in
Theorem~\ref{thm:polytopes-ub} has to depend on $B$, because we can
approximate the unit Euclidean ball $D^d$ in $\R^d$ arbitrarily well
with a convex polytope, and $\disc(n, \TT_{D^d}) = \Omega_d(n^{1/2 -
  1/(2d)})$~\cite{Alexander91}.

Together, these results give nearly tight estimates on the discrepancy
function $\disc(n, \TT_B)$ for almost any convex polytope $B$. Perhaps
surprisingly, they imply that the order of growth of $\disc(n, \TT_B)$
with $n$ is essentially independent of the particular structure of $B$, at least
when $B$ is generic, and, moreover, that order of growth is
achieved by the simplest possible example, a cube. In the next section
we describe applications of our results to geometric discrepancy,
quasi-Monte Carlo methods, data structure lower bounds, and
differential privacy.

\section{Applications}\label{sect:apps}

In this section we outline several applications of our discrepancy
upper and lower bounds. 

\subsection{Geometric Discrepancy and Numerical Integration}

Geometric discrepancy measures the irregularity of a distribution of
$n$ points in $[0,1]^d$ with respect to a family of distinguishing
sets. In particular, for an $n$-point set $P \subseteq [0,1]^d$ and a
family of measurable subsets $\FF$ of $\R^d$, we define the
discrepancy 
\[
D(P,\FF) \eqdef \sup_{F \in \FF}\left| |P \cap F| - \lambda_d(F \cap [0,1]^d)\right|,
\]
where $\lambda_d$ is the Lebesgue measure on $\R^d$. The smallest
achievable discrepancy over $n$-point sets with respect to $\FF$ is
denoted $D(n, \FF) \eqdef \inf\{D(P, \FF): P \subset \R^d, |P| = n\}$.
A famous result of Schmidt~\cite{Schmidt-VII} shows that $D(n, \AA_2)
= \Theta(\log n)$. The picture is much less clear in higher
dimensions. In a seminal paper~\cite{Roth54}, Roth showed that $D(n,
\AA_d) = \Omega_d(\log^{(d-1)/2} n)$ for any $d \ge 2$; the best known
lower bound in $d\ge 3$ is due to Bilyk, Lacey, and
Vagharshakyan~\cite{BilykLV08} and is $D(n, \AA_d)
=\Omega_d(\log^{(d-1)/2 + \eta_d} n)$, where $\eta_d$ is a positive
constant depending on $d$ and going to $0$ as $d$ goes to infinity. On
the other hand, the best known upper bound is $D(n, \AA_d) =
O_d(\log^{d-1}n)$ and can be achieved in many different ways, one of
the simplest being the Halton-Hammersley construction~\cite{Halton60,
  Hammersley60}. The book by Beck and Chen~\cite{BeckChen} calls the
problem of closing this significant gap ``the Great Open Problem'' (in
geometric discrepancy theory). See the book of
Matou\v{s}ek~\cite{Matousek-book} for further background on geometric
discrepancy.

There is a known connection between combinatorial and geometric
discrepancy. Roughly speaking, combinatorial discrepancy is an upper
bound on geometric discrepancy. More precisely, we have the following
transference lemma, which goes back to the work of Beck on Tusn\'ady's
problem~\cite{Beck81} (see~\cite{Matousek-book} for a proof). 

\begin{lemma}\label{lm:transference}
  Let $\FF$ be a family of measurable sets in $\R^d$ such that there
  is some $F \in \FF$ which contains $[0,1]^d$. Assume that
  $\frac{D(n,\FF)}{n}$ goes to $0$ as $n$ goes to infinity, and that
  $\disc(n,\FF) \le f(n)$ for a function $f$ that satisfies $f(2n) \le
  (2-\delta)f(n)$ for all $n$ and some fixed $\delta > 0$. Then there
  exists a constant $C_\delta$ that only depends on $\delta$, for
  which $D(n,\FF) \le C_\delta f(n)$. 
\end{lemma}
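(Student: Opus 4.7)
The approach is the standard halving argument going back to Beck. The plan is to start with a very large $N$-point set in $[0,1]^d$ that nearly achieves the geometric discrepancy $D(N, \FF)$, and then repeatedly apply the hypothesis $\disc \le f$ to extract a sub-multiset of roughly half the size with controlled geometric discrepancy. Iterating $k \approx \log_2(N/n)$ times yields a set of size about $n$ whose geometric discrepancy inherits contributions from each halving step.

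Concretely, fix $N = 2^k n$ with $k$ large (to be chosen later), and pick an $N$-point set $P_0 \subset [0,1]^d$ with $D(P_0, \FF) \le D(N,\FF) + 1$. Given $P_i$, use $\disc \le f$ to produce a coloring $\chi_i \maps P_i \to \{-1,+1\}$ with $\bigl|\sum_{p \in P_i \cap F} \chi_i(p)\bigr| \le f(|P_i|)$ for every $F \in \FF$, and set $P_{i+1} \eqdef \chi_i^{-1}(+1)$. Starting from $|P_{i+1} \cap F| = \tfrac12\bigl(|P_i \cap F| + \sum_{p \in P_i \cap F}\chi_i(p)\bigr)$ and using the definition of $D(P_i, \FF)$ gives
\[
  \bigl|\, |P_{i+1} \cap F| - \tfrac{|P_i|}{2}\,\lambda_d(F \cap [0,1]^d) \,\bigr| \le \tfrac12 D(P_i, \FF) + \tfrac12 f(|P_i|).
\]
Applying the same coloring to a set $F^* \in \FF$ containing $[0,1]^d$ (which exists by hypothesis) yields $\bigl||P_i| - 2|P_{i+1}|\bigr| \le f(|P_i|)$, and the triangle inequality upgrades the display above to the recursion $D(P_{i+1}, \FF) \le \tfrac12 D(P_i, \FF) + f(|P_i|)$.

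Unrolling gives $D(P_k, \FF) \le 2^{-k} D(P_0, \FF) + \sum_{i=0}^{k-1} 2^{-(k-i-1)} f(|P_i|)$. The growth condition $f(2m) \le (2-\delta) f(m)$ forces $f(m) = O(m^{\log_2(2-\delta)}) = o(m)$, which inductively bounds the drift of $|P_i|$ from $N/2^i$ by a constant factor and implies $f(|P_i|) \ale (2-\delta)^{k-i} f(|P_k|)$. The sum in the recursion is then geometric with ratio $(2-\delta)/2 = 1 - \delta/2$, so it is bounded by $C_\delta f(|P_k|)$, while the assumption $D(N,\FF)/N \to 0$ makes $2^{-k} D(P_0, \FF) \le n(D(N,\FF)+1)/N$ vanish as $N \to \infty$. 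Taking $N$ large and then trimming or padding $|P_k|$ to exactly $n$ (at an additional additive cost of $O(f(n))$) delivers $D(n, \FF) \le C_\delta f(n)$.

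The main bookkeeping obstacle is controlling the drift of $|P_i|$ away from $N/2^i$: each halving step perturbs the size by up to $f(|P_i|)/2$, and one must verify the perturbations stay within a constant factor so that the chain $f(|P_i|) \le O((2-\delta)^{k-i}) f(|P_k|)$ survives. The hypothesis that some $F^* \supseteq [0,1]^d$ belongs to $\FF$ is essential here: without it, no combinatorial discrepancy bound controls $\bigl||P_{i+1}| - |P_i|/2\bigr|$, and the recursion would not even approximately preserve sizes.
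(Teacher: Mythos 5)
Your overall strategy is the right one: this is the standard Beck-style halving/transference argument, and the paper itself does not prove the lemma but defers to Matou\v{s}ek's book, whose proof is the same idea. In particular, your use of the hypothesis that some $F^* \in \FF$ contains $[0,1]^d$ to control the imbalance of each coloring is exactly the role that hypothesis plays. However, there is a genuine gap in the bookkeeping step you yourself flag. You evaluate $f$ at the drifting sizes $|P_i|$ and claim $f(|P_i|) \ale (2-\delta)^{k-i} f(|P_k|)$, supported by the assertion that $f(2m) \le (2-\delta)f(m)$ forces $f(m) = O(m^{\log_2(2-\delta)})$. Neither claim follows from the stated hypotheses: $f$ is not assumed monotone, and the doubling condition constrains $f$ only along exact pairs $(m,2m)$. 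For example, $f(2^j r) \eqdef (2-\delta)^j 2^{r}$ for odd $r$ satisfies $f(2m) \le (2-\delta)f(m)$ for every $m$ yet is enormous at odd arguments; since the sizes $|P_i|$ are dictated by adversarial colorings and drift off the dyadic chain $2^{k-i}n$, your sum $\sum_{i} 2^{-(k-i-1)} f(|P_i|)$, and likewise the final trimming/padding cost $\bigl||P_k|-n\bigr|$, cannot be bounded by $C_\delta f(n)$ from what is assumed.

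The repair is to restructure so that $f$ is only ever evaluated at the exact sizes $2^j n$, where iterating the hypothesis along the chain $n, 2n, 4n, \ldots$ legitimately gives $f(2^j n) \le (2-\delta)^j f(n)$. Concretely, prove the recursion $D(n,\FF) \le \tfrac12 D(2n,\FF) + \tfrac32 f(2n)$: take a $2n$-point set $P$ with $D(P,\FF) \le D(2n,\FF)+\eps$, color it with discrepancy at most $f(2n)$, assume (flipping signs if needed) that the $+1$ class $P^+$ is the larger one, note via $F^*$ that $0 \le |P^+| - n \le f(2n)/2$, and delete $|P^+|-n$ arbitrary points to reach exactly $n$ points, which changes each count $|P^+ \cap F|$ by at most $f(2n)/2$; combining this with your identity $|P^+\cap F| = \tfrac12\bigl(|P\cap F| + \sum_{p\in P\cap F}\chi(p)\bigr)$ gives the recursion. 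Unrolling $k$ times yields $D(n,\FF) \le 2^{-k}D(2^k n,\FF) + \tfrac32 \sum_{j=1}^{k} 2^{-(j-1)}(2-\delta)^j f(n)$, where the sum is geometric with ratio $1-\delta/2$ and hence at most $C_\delta f(n)$, while $2^{-k}D(2^k n,\FF) = n\cdot D(2^k n,\FF)/(2^k n) \to 0$ as $k\to\infty$ by the hypothesis that $D(m,\FF)/m \to 0$. This is essentially the proof in the source the paper cites; alternatively, your top-down chain goes through if you additionally assume, or first reduce to, a nondecreasing $f$, but that reduction is a step you would need to supply.
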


Lemma~\ref{lm:transference} and Theorem~\ref{thm:polytopes-ub} imply
that $D(n,\TT_B) = O_{d,B}(\log^{d-1/2}n)$ for any convex polytope $B$
in $\R^d$. This bound gets within an $O_{d,B}(\sqrt{\log n})$ factor
from the best bound known for axis-aligned boxes in $d$
dimensions. The tightest bound known prior to our work was
$O_{d,B}(\log^d n)$ and was also implied by the best previously known
upper bound on combinatorial discrepancy. 

Geometric discrepancy can be defined with any Borel probability
measure $\nu$ on $[0,1]^d$ in place of the Lebesgue measure: let's
call the resulting quantities $D(P, \FF, \nu)$ and $D(n, \FF, \nu)$. It
turns out that Lemma~\ref{lm:transference} holds with $D(n, \FF)$
replaced by $D(n, \FF, \nu)$, and, together with
Theorem~\ref{thm:tusnady-ub} we get that $D(n, \AA_d, \nu) =
O_d(\log^{d-1/2}n)$ for any Borel probability measure $\nu$ on $[0,1]^d$. This
bound has an application to the quasi-Monte Carlo method in numerical
integration. A version of the Koksma-Hlawka inequality for general
measures due to G\"otz~\cite{Gotz02} shows that for any real-valued
function $f$ on $[0,1]^d$ of bounded total variation in the sense of
Hardy and Krause, we have
\[
\left| \int_{[0,1]^d}f(x)d\nu(x) - \frac{1}{n} \sum_{x \in
    P}{f(x)}\right|
\le 
\frac{1}{n}V(f) D(P, \AA_d, \nu).
\]
Here, $V(f)$ is the Hardy-Krause variation of $f$. So, our upper bound
implies that for any function $f$ of constant total variation, and any
Borel measure $\nu$, we can numerically estimate the integral of $f$
with respect to $\nu$ by averaging the values of $f$ at $n$ points, and
the estimate has error $O_d(n^{-1} \log^{d-1/2}(n))$. Contrast this with the
$O_d(n^{-1/2})$ error rate achieved by the Monte Carlo method. The error
rate we achieve is within $O_d(\sqrt{\log n})$ of the best error rate
known for Lebesgue measure. Integration with respect to measures other
than the Lebesgue measure arises often, and a constructive
proof of our upper bound could have significant impact in practice. We
refer to the note~\cite{mcqmc} for an exposition of these
connections. 

\subsection{Range Searching Lower Bounds}\label{sect:ds}

In the dynamic range searching problem, we are given a \emph{range space}
$\FF(P)$, where $\FF$ is a collection of subsets of $\R^d$, and $P$ is
an $n$-point set in $\R^d$; our goal is to design a data structure
which keeps a set of weights $w \in G^P$ under updates, where
the weights come from a commutative group $G$. An update
specifies a point $p$ of $P$ and an element $g \in G$, and asks to
change the weight of $p$ to $w_p + g$. The data structure should be
able to answer range searching queries, where a query is specified by
a range $F \in \FF(P)$, and must return the answer $\sum_{p \in
  F}{w_p}$. The main question for this data structure problem is to
identify a tight trade-off between the update time and the query
time.

Fredman~\cite{Fredman82} first observed that many data structures for
the dynamic range searching problem can be identified with a matrix
factorization $A = UV$ of the incidence matrix $A$ of $\FF(P)$ into
two matrices $U$ and $V$ with integer entries. Following
Larsen~\cite{larsen14}, we define an oblivious data structure with
multiplicity $\Delta$ for the dynamic range searching problem for the
range space $\FF(P)$ as a factorization $A = UV$ of the incidence
matrix of $\FF(P)$ into matrices $U$ and $V$ with integer entries
bounded in absolute value by $\Delta$. The update time $t_u$ for such
a data structure equals the maximum number of non-zero entries in any
column of $V$; the query time $t_q$ equals the maximum number of
non-zero entries in any row of $U$. 

Our arguments imply the following result.
 \begin{theorem}\label{thm:ds}
   For any generic convex polytope $B$ in $\R^d$ there exists a
   family of point sets $P$ in $\R^d$ so that for any
   family of oblivious data structures with multiplicity $\Delta$ for
   the dynamic range counting problem with range space $\TT_B(P)$, we
   have $\sqrt{t_u t_q} =
   \Omega_{d,B}(\Delta^{-1}\log^{d}n)$. Conversely, for any convex
   polytope $B$ and any $n$-point set $P$ in $\R^d$ there exists an
   oblivious data structure with multiplicity $\Delta = 1$ and
   $\sqrt{t_ut_q} = O_{d,B}(\log^{d}n)$.
 \end{theorem}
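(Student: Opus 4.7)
My plan is to deduce both directions of the theorem from ingredients that are essentially present (at least implicitly) in the rest of the paper. For the lower bound I would use the discrepancy lower bound of Theorem~\ref{thm:lb-main}, promoted to a lower bound on the $\gamma_2$ factorization norm, combined with Larsen's matrix-factorization framework; for the upper bound, a direct range-tree factorization.

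\emph{Lower bound.} Let $P$ be the hard $n$-point set used in the proof of Theorem~\ref{thm:lb-main}, and let $A$ be the incidence matrix of $\TT_B(P)$. The main observation is that the Fourier-analytic construction underlying the proof of Theorem~\ref{thm:lb-main} actually certifies $\gamma_2(A) = \Omega_{d,B}(\log^d n)$: the trigonometric test function produced there, read through its Gram decomposition as a positive semidefinite form on the points of $P$, should furnish exactly the dual certificate demanded by the variational characterization of $\gamma_2$, with the discrepancy lower bound of Theorem~\ref{thm:lb-main} following as the usual corollary after losing a $\sqrt{\log n}$ factor in the passage from $\gamma_2$ to $\herdisc$. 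Composed with the Larsen-style factorization inequality \cite{larsen14} --- which states that any integer factorization $A = UV$ with entries bounded by $\Delta$ and row/column sparsities $t_q$ and $t_u$ satisfies $\gamma_2(A) \le O(\Delta\sqrt{t_u t_q})$ --- this yields $\sqrt{t_u t_q} = \Omega_{d,B}(\Delta^{-1}\log^d n)$.

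\emph{Upper bound.} For $\Delta = 1$ I would exhibit an explicit range-tree factorization. In the axis-aligned anchored-box case, a $d$-level orthogonal range tree gives a $\{0,1\}$-valued factorization $A = UV$ whose row and column sparsities are both $O(\log^d n)$, so $\sqrt{t_u t_q} = O(\log^d n)$. For a general convex polytope $B$, I would invoke the Matou\v{s}ek-type decomposition used in the proof of Theorem~\ref{thm:polytopes-ub} to write $\TT_B$ as a union of $O_{d,B}(1)$ subfamilies, each equivalent --- after an affine change of coordinates --- to a family of axis-aligned anchored boxes; stacking the per-subfamily range trees produces a single $\{0,1\}$-factorization with $t_u, t_q = O_{d,B}(\log^d n)$.

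\emph{Main obstacle.} The crux is the first step of the lower bound: upgrading the discrepancy lower bound of Theorem~\ref{thm:lb-main} to a $\gamma_2$ lower bound of the same order in $\log n$. What has to be verified is that the Fourier witness, viewed as a quadratic form on the columns of $A$, has trace and operator norm in the correct ratio to certify $\gamma_2(A) = \Omega_{d,B}(\log^d n)$ without sacrificing extra polylog factors. Once that is done, the Larsen factorization inequality and the range-tree construction are standard bookkeeping.
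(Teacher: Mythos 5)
Your proposal is correct and follows essentially the same route as the paper: the lower bound is exactly equation~\eqref{eq:gamma2-lb}, $\gamma_2(\TT_B(P)) = \Omega_{d,B}(\log^{d} n)$ for the point set built in the proof of Theorem~\ref{thm:lb-main}, combined with the factorization inequality $\sqrt{t_u t_q} \ge \Delta^{-1}\gamma_2(\FF(P))$ stated in the preliminaries, while the upper bound is Theorem~\ref{thm:polytopes-ub-gamma2} together with the observation that the dyadic (range-tree) factorization of Lemma~\ref{lm:factor}, transported by the linear change of coordinates in Lemma~\ref{lm:factor-W} and composed through Matou\v{s}ek's decomposition (Lemma~\ref{lm:decomp}), yields a factorization with entries of absolute value at most $1$, i.e.\ multiplicity $\Delta = 1$. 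The one clarification is that the ``main obstacle'' you flag is not an obstacle: the paper proves the $\gamma_2$ bound \emph{first} --- via the trace norm of the convolution matrix $M(B_u, 2n+1)$, whose eigenvalues are discrete Fourier coefficients, together with \eqref{eq:dual-simple} and a triangle-inequality transfer to the incidence matrix of $\TT_B(P)$ --- and the discrepancy statement of Theorem~\ref{thm:lb-main} is the corollary (losing a $\log$ factor via \eqref{eq:lb}), so no ``promotion'' from discrepancy back to $\gamma_2$ is needed.
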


 Theorem~\ref{thm:ds} implies that, as for the discrepancy question,
 the geometric mean of query and update time grows with $n$ at the
 same rate for any (generic) convex polytope $B$, and that order of
 growth is already achieved for orthogonal range
 searching. Remarkably, our upper and lower bounds are tight up to
 constants when the multiplicity is bounded. Our main contribution is
 the lower bounds, while the upper bounds follow from standard
 techniques.

\subsection{Differential Privacy}\label{sect:dp}

Range searching and range counting problems also naturally arise in
differential privacy. The setting here is that, given the range space
$\FF(P)$ as above, we have as input a database $D$ which is a multiset
of points from $P$. The goal is to output the number (with
multiplicity) of points in $D$ that fall in each range $F \in
\FF(P)$. However, the database $D$ could be sensitive because it may,
for example, encode the locations of different people. For this
reason, we want to approximate the range counts under the constraints
of differential privacy~\cite{DMNS,DworkR14}. Formally, a randomized
algorithm $\mathcal{M}$ (``mechanism'' in the terminology of
differential privacy) is $(\varepsilon, \delta)$-differentially
private if, for any two databases $D$, $D'$ that differ in the
location of single point, and all measurable subsets $S$ of the range
of $\mathcal{M}$, we have
\[
\Pr(\mathcal{M}(D) \in S) \le e^\varepsilon\Pr(\mathcal{M}(D) \in S) + \delta.
\]

We define the error of a mechanism $\mathcal{M}$ as the maximum of 
\[
\E\max_{F \in \FF(P)}|\mathcal{M}(D)_F - |D \cap F||
\]
over databases $D$, where the expectation is with respect to the
randomness of $\mathcal{M}$, and $\mathcal{M}(D)_F$ is the output that
$\mathcal{M}$ gives on input $D$ for the range $F$. Let
$\mathrm{opt}_{\varepsilon, \delta}(\FF(P))$ be the smallest
achievable error of an $(\varepsilon, \delta)$-differentially private
algorithm on $\FF(P)$, and let $\mathrm{opt}_{\varepsilon, \delta}(N,
\FF) = \sup \mathrm{opt}_{\varepsilon, \delta}(\FF(P))$, where the
supremum is over all $N$-point sets $P$ in $\R^d$.

Our techniques imply the following result.

\begin{theorem}\label{thm:dp}
  For any generic convex polytope $B$ in $\R^d$, for all small enough
  $\varepsilon$, and all $\delta$ small enough with respect to
  $\varepsilon$, we have
  \begin{align*}
    \mathrm{opt}_{\varepsilon, \delta}(N, \TT_B) &= 
    \Omega_{d,B}(\varepsilon^{-1}\log^{d-1}N),\\
    \mathrm{opt}_{\varepsilon, \delta}(N, \TT_B) &= 
    O_{d,B}(\varepsilon^{-1} \sqrt{\log 1/\delta}\log^{d+1/2}N).
  \end{align*}
  Moreover, the upper bound holds for any (not necessarily generic)
  convex polytope $B$.
\end{theorem}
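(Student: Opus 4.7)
The plan is to derive both bounds from the combinatorial machinery developed for Theorems~\ref{thm:polytopes-ub} and~\ref{thm:lb-main}, via now-standard reductions between differential privacy and combinatorial discrepancy: on the lower-bound side one connects $\mathrm{opt}_{\varepsilon,\delta}$ to the hereditary discrepancy of $\TT_B(P)$, and on the upper-bound side one connects it to the factorization norm $\gamma_2$ of the incidence matrix.

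For the lower bound I would invoke the known fact (due to Muthukrishnan--Nikolov and refined by Nikolov--Talwar--Zhang) that, for $\varepsilon$ sufficiently small and $\delta$ sufficiently small with respect to $\varepsilon$, any $(\varepsilon,\delta)$-differentially private mechanism answering range-counting queries on $\FF(P)$ has expected $\ell_\infty$ error at least $\Omega(\varepsilon^{-1}\herdisc \FF(P))$. It then suffices to show that the conclusion of Theorem~\ref{thm:lb-main} is in fact a hereditary lower bound for $\TT_B$. This is essentially automatic from the Fourier-analytic construction: for each $N$ that theorem produces an $N$-point set $P \subset \R^d$ with $\disc \TT_B(P) = \Omega_{d,B}(\log^{d-1}N)$, and applying the same argument to the first $N'$ points of $P$ (or to any large induced sub-configuration of the construction) yields the analogous estimate for every $N' \le N$, so $\herdisc \TT_B(P) = \Omega_{d,B}(\log^{d-1}N)$. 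Combining the two inequalities gives the claimed lower bound, which uses genericity only through Theorem~\ref{thm:lb-main}.

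For the upper bound I would apply the Gaussian matrix mechanism. Let $A$ be the incidence matrix of $\TT_B(P)$ and factor $A = UV$ with rows of $U$ of $\ell_2$-norm at most $c_U$ and columns of $V$ of $\ell_2$-norm at most $c_V$; release $Aw + Ug$, where $g$ is a spherical Gaussian vector of standard deviation $\sigma = \Theta(c_V \varepsilon^{-1}\sqrt{\log 1/\delta})$. A standard calculation shows this mechanism is $(\varepsilon,\delta)$-DP, with expected $\ell_\infty$ error $O(\sigma c_U\sqrt{\log m})$, where $m = |\TT_B(P)| \le N^{O(d)}$; optimizing over the factorization yields total error $O(\gamma_2(A)\,\varepsilon^{-1}\sqrt{\log 1/\delta}\,\sqrt{\log m})$. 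The factorization underlying the proof of Theorem~\ref{thm:polytopes-ub} — which composes Matou\v{s}ek's decomposition of the polytope $B$ with a Haar-wavelet-type factorization for anchored boxes — achieves $\gamma_2(A) = O_{d,B}(\log^{d}N)$ for every convex polytope $B$, with no genericity needed. Since $\sqrt{\log m} = O_d(\sqrt{\log N})$, the total error is $O_{d,B}(\varepsilon^{-1}\sqrt{\log 1/\delta}\,\log^{d+1/2}N)$, as claimed.

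The main technical point, and the reason the DP upper bound has an extra $\sqrt{\log N}$ factor compared to the discrepancy bound of Theorem~\ref{thm:polytopes-ub}, is that the Gaussian mechanism cannot benefit from the Banaszczyk-based improvement exploited in that proof: its noise scale is rigidly tied to the column norms of $V$, so only the $\gamma_2$-estimate carries over and the Banaszczyk saving is lost. One therefore has to inspect the proof of Theorem~\ref{thm:polytopes-ub} and extract a clean $O_{d,B}(\log^{d}N)$ bound on $\gamma_2(A)$ \emph{before} any signed-series argument is applied, and verify that this bound is preserved through Matou\v{s}ek's polytope decomposition. On the lower-bound side the only subtlety is checking that the Fourier construction of Theorem~\ref{thm:lb-main} is hereditary, which, as noted above, follows because the argument works uniformly for all large $N$.
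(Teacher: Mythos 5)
Your upper bound is essentially the paper's own argument: the paper simply combines Theorem~\ref{thm:dp-gamma2} (whose proof is exactly the Gaussian factorization mechanism you describe) with Theorem~\ref{thm:polytopes-ub-gamma2}, which already isolates the bound $\gamma_2(\TT_B(P)) = O_{d,B}(\log^{d}N)$ for an arbitrary convex polytope before any signed-series argument, and with the observation that $\log|\TT_B(P)| = O_{d,B}(\log N)$. That part of your proposal is correct and matches the paper.

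The lower bound, however, has a genuine gap. You invoke a transference of the form ``any $(\varepsilon,\delta)$-private mechanism has error $\Omega(\varepsilon^{-1}\herdisc\FF(P))$'' with no logarithmic loss. No such clean statement is available: the known reductions from privacy to discrepancy-type quantities (Muthukrishnan--Nikolov, Nikolov--Talwar--Zhang, and Theorem~\ref{thm:dp-gamma2} of this paper) lose at least a factor of $\log|\FF(P)|$; for instance Theorem~\ref{thm:dp-gamma2} gives $\mathrm{opt}_{\varepsilon,\delta}(\FF(P)) \ge \gamma_2(\FF(P))/(C\varepsilon\log|\FF(P)|)$, and passing from $\gamma_2$ to $\herdisc$ via \eqref{eq:ub} costs a further $\sqrt{\log m}$. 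Since the only input you feed into this transference is $\herdisc\TT_B(P)\ge\disc\TT_B(P)=\Omega_{d,B}(\log^{d-1}N)$ (note that heredity is automatic, since $\herdisc\FF(P)\ge\disc\FF(P)$ by taking $Q=P$, so your sub-configuration argument is unnecessary), the unavoidable polylogarithmic loss cannot be absorbed, and your route only yields $\Omega_{d,B}\bigl(\varepsilon^{-1}\log^{d-1}(N)/\mathrm{polylog}(N)\bigr)$, weaker than claimed. The paper sidesteps this by not going through hereditary discrepancy at all: the Fourier-analytic construction produces the stronger estimate $\gamma_2(\TT_B(P)) = \Omega_{d,B}(\log^{d}N)$ (equation~\eqref{eq:gamma2-lb}), a full logarithm above the discrepancy conclusion of Theorem~\ref{thm:lb-main}, and this extra logarithm exactly pays for the $1/\log|\FF(P)|$ loss in Theorem~\ref{thm:dp-gamma2}, giving $\Omega_{d,B}(\varepsilon^{-1}\log^{d-1}N)$. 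The fix to your argument is therefore to cite the $\gamma_2$ lower bound established inside the proof of Theorem~\ref{thm:lb-main}, rather than its discrepancy statement.
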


Once again, our result shows that  the growth of the best possible
error under differential privacy with $N$ for the range counting
problem with ranges induced by a convex polytope $B$ does not depend
strongly on the structure of the particular polytope $B$.

\section{Preliminaries and Techniques}

In what follows $C_p$ and $c_p$ are constants that depend only on the
parameter $p$ and may change from one line to the next. All logarithms
are assumed to be in base $e$ (although usually this does not
matter). We use $\langle \cdot, \cdot \rangle$ for the standard
inner product on $\R^n$, and $\|\cdot \|_2$ for the corresponding
Euclidean norm. We use capital letters to denote matrices,
and lower case letters with indexes to denote matrix entries,
e.g.~$a_{ij}$ to denote the entry in row $i$ column $j$ of the matrix
$A$.  We also use $\mu_n$ for the standard Gaussian measure on
$\R^n$. (We avoid the more standard $\gamma_n$ to avoid confusion with
the $\gamma_2$ factorization norm.) We use $\sigma_{d-1}$ for the
uniform (rotation invariant, Haussdorff) probability measure on the
$d-1$ dimensional unit Euclidean sphere $S^{d-1}$ in $\R^d$. We use
$\theta_d$ for the Haar probability measure on the orthogonal group
$\mathbf{O}(d)$. 

\subsection{Hereditary Discrepancy}

Hereditary discrepancy is a robust version of combinatorial
discrepancy. For a set system $\FF$ of subsets of a set $P$, the
hereditary discrepancy of $\FF$ is defined by $\herdisc \FF = \max_{Q
  \subseteq P}{\disc \FF(Q)}$. Hereditary discrepancy is often more
tractable than discrepancy itself, both analytically, and
computationally. E.g.~while a non-trivial approximation to discrepancy
is in general $\mathsf{NP}$-hard~\cite{dischard}, hereditary
discrepancy can be approximated up to polylogarithmic
factors~\cite{apx-disc,disc-gamma2}. Importantly for us, this
robustness comes at no additional cost: for all collections of sets
$\FF$ that we study, it is easy to see that, for any $n$-point set $P$
in $\R^d$, $\disc(n, \FF) \ge \herdisc \FF(P)$. 

\subsection{The $\gamma_2$ factorization norm}

The $\gamma_2$ norm was introduced in functional analysis to study
operators that factor through Hilbert space. 
We say that an operator $u\maps X \to Y$ between Banach spaces $X$ and $Y$ factors through a Hilbert space
if there exists a Hilbert space $H$ and bounded operators $u_1\maps X \to
H$ and $u_2\maps H \to Y$ such that $u = u_2 u_1$. Then the $\gamma_2$
norm of $u$ is
\[
\gamma_2(u) \eqdef \inf \|u_1\| \|u_2\|,
\]
where the infimum is taken over all Hilbert spaces $H$, and all
operators $u_1$ and $u_2$ as above. Here $\|u_1\|$ and $\|u_2\|$ are
the operator norms of $u_1$ and $u_2$, respectively. The book of
Tomczak-Jaegermann~\cite{TJ-book} is an excellent reference on
factorization norms and their applications in Banach space theory.

In this work we will use the $\gamma_2$ norm of an $m\times n$ matrix
$A$, which is defined as the $\gamma_2$ norm of the linear operator
$u\maps\ell_1^n \to \ell_\infty^m$ with matrix $A$ (in the standard
bases of $\R^m$ and $\R^n$). In the language of matrices, this means
that 
\[
\gamma_2(A) \eqdef \inf\{ \|U\|_{2\to \infty}\|V\|_{1 \to 2}: A = UV\},
\]
where the infimum is taken over matrices $U$ and $V$, $\|V\|_{1\to 2}$
equals the largest $\ell_2$ norm of a column of $V$, and
$\|U\|_{2\to\infty}$ equals the largest $\ell_2$ norm of a row of
$U$. By a standard compactness argument, the infimum is achieved;
moreover, we can take $U \in \R^{m \times r}$ and $V \in \R^{r \times
  n}$, where $r$ is the rank of $A$. Yet another equivalent
formulation, which will be convenient for us, is that $\gamma_2(A)$ is
the smallest non-negative real $t$ for which there exist vectors $u_1,
\ldots, u_m, v_1, \ldots, v_n \in \R^r$ such that for any $i \in [m],
j \in [n]$, $a_{ij} = \langle u_i, v_j\rangle$ and $\|u_i\|_2 \le t$,
$\|v_j\|_2 \le 1$. For a proof of the not completely trivial fact that
$\gamma_2$ is a norm, see~\cite{TJ-book}. Given a matrix
$A$, $\gamma_2(A)$ can be computed efficiently by solving a
semidefinite program~\cite{LinialMSS07-signmatrices}.

Let us further overload the meaning of $\gamma_2$ by defining
$\gamma_2(\FF) = \gamma_2(A)$ for a set system $\FF$ with incidence
matrix $A$. We recall that the incidence matrix of a system $\FF =
\{F_1, \ldots, F_m\}$ of
subsets of a set $P = \{p_1, \ldots, p_n\}$ is defined as
\[
a_{ij} \eqdef
\begin{cases}
  1 &p_j \in F_i\\
  0 &p_j \not\in F_i\\
\end{cases}.
\]
In other worse, $\gamma_2(\FF)$ is the smallest non-negative real $t$
such that there exist vectors $u_1, \ldots, u_m, v_1, \ldots, v_n$
satisfying 
\[
\langle u_i, v_j\rangle =
\begin{cases}
  1 &p_j \in F_i\\
  0 &p_j \not\in F_i\\
\end{cases},
\]
and $\|u_i\|_2 \le t$, $\|v_j\|_2 \le 1$ for all $i \in [m], j \in
[n]$. 

In~\cite{apx-disc,disc-gamma2} it was shown that $\gamma_2(\FF)$ is,
up to logarithmic factors, equivalent to hereditary discrepancy:
\begin{align}
  \herdisc\FF &\le C\sqrt{\log m}\ \gamma_2(\FF)\label{eq:ub};\\
  \herdisc\FF &\ge c\frac{ \gamma_2(\FF)}{\log \rank A}. \label{eq:lb}
\end{align}
where $C,c >0$ are  absolute constants.

In~\cite{e8-tusnady,disc-gamma2} it was also shown that
$\gamma_2$ satisfies a number of nice properties which help in
estimating the norm of specific matrices or set systems. Here we only
need the following inequality, which holds for a set system $\FF =
\FF_1 \cup \ldots \cup \FF_k$, where $\FF_1, \ldots, \FF_k$ are set systems
over the same set $P$:
\begin{equation}
  \label{eq:union}
  \gamma_2(\FF) \le \sqrt{\sum_{i = 1}^k{\gamma_2(\FF_i)^2}}.
\end{equation}

We  also need the following simple lemma, which follows, e.g.~from the
results in~\cite{e8-tusnady,disc-gamma2}, but also appears in a
similar form in~\cite{larsen14}, and follows from standard dyadic
decomposition techniques.
\begin{lemma}\label{lm:factor}
  For any $d \ge 1$ there exists a constant $C_d$ such that for any
  $n$-point set $P \subset \R^d$, $\gamma_2(\AA_d(P)) \le C_d (1+\log
  n)^{d}$. Moreover, there exists a factorization of the incidence
  matrix of $\AA_d(P)$ achieving this bound into matrices $U$ and $V$
  with entries in the set $\{0,1\}$. 
\end{lemma}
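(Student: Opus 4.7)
The plan is a two-stage construction: first build a $\{0,1\}$-factorization in one dimension via a dyadic (segment-tree) decomposition, and then tensor it with itself $d$ times.

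For $d = 1$, sort the points of $P$ by their coordinate, build a balanced binary tree $T$ with these $n$ points at the leaves, and let $N \le 2n-1$ denote the number of nodes; for each node $v$ write $\mathrm{desc}(v)$ for its set of descendant leaves. A standard segment-tree argument shows that every prefix $A(x) \cap P$ can be written as a disjoint union of at most $\lceil \log_2 n\rceil + 1$ sets $\mathrm{desc}(v)$. I would then define a matrix $V_1 \in \{0,1\}^{N \times n}$ by $(V_1)_{v,p} = 1$ iff $p \in \mathrm{desc}(v)$, and a matrix $U_1 \in \{0,1\}^{\cdot \times N}$ whose rows are indexed by the distinct sets in $\AA_1(P)$ and whose $(x,v)$-entry is $1$ iff $\mathrm{desc}(v)$ appears in the chosen canonical decomposition of $A(x) \cap P$. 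Then $U_1 V_1$ is the incidence matrix of $\AA_1(P)$; every row of $U_1$ has $O(\log n)$ nonzero entries, and so does every column of $V_1$ (since each leaf has at most $\lceil \log_2 n\rceil$ ancestors).

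For general $d \ge 2$, I would run the construction above separately for each coordinate $i \in [d]$ (building the tree $T_i$ on the $i$-th coordinates of the points), obtaining matrices $U_i, V_i$, and then set $U := U_1 \otimes \cdots \otimes U_d$ and $V := V_1 \otimes \cdots \otimes V_d$, restricting the rows of $U$ to indices corresponding to the distinct sets in $\AA_d(P)$ and the columns of $V$ to indices corresponding to the $n$ actual points of $P$. Because $p \in A(x)$ factors coordinatewise, the entry of $UV$ at $(A(x) \cap P, p)$ equals $\prod_{i=1}^d (U_i V_i)_{x_i, p_i} = \prod_{i=1}^d \mathbf{1}[p_i \le x_i] = \mathbf{1}[p \in A(x)]$, so $UV$ is precisely the incidence matrix of $\AA_d(P)$. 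The entries of $U$ and $V$ are products of $\{0,1\}$-entries, hence lie in $\{0,1\}$.

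Finally, since the columns and rows of a Kronecker product factor as tensor products of the corresponding columns and rows, one has $\max_p \|V_{\cdot,p}\|_2 = \max_p \prod_{i=1}^d \|(V_i)_{\cdot,p_i}\|_2 \le C_d (1 + \log n)^{d/2}$, and symmetrically $\max_x \|U_{x,\cdot}\|_2 \le C_d (1 + \log n)^{d/2}$; restricting rows or columns only decreases these maxima. Combining, $\gamma_2(\AA_d(P)) \le \|U\|_{2\to\infty}\|V\|_{1\to 2} \le C_d' (1+\log n)^d$. The whole argument is essentially routine dyadic decomposition plus the tensorial behavior of row/column norms; the only point I would be careful about is keeping the row- and column-restrictions compatible with the tensor indexing, so that the restricted product is exactly the incidence matrix of $\AA_d(P)$ while preserving the $\{0,1\}$-valued nature of the factors.
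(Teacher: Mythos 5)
Your proof is correct, and it is exactly the ``standard dyadic decomposition'' argument that the paper invokes for this lemma without writing it out (the paper only cites \cite{e8-tusnady,disc-gamma2,larsen14}): a one-dimensional segment-tree $\{0,1\}$-factorization with row/column $\ell_2$ norms $O(\sqrt{1+\log n})$, tensored over the $d$ coordinates and restricted to the diagonal columns and the rows realizing the sets of $\AA_d(P)$. The restriction step is handled correctly, since restricting rows of $U$ and columns of $V$ yields the corresponding submatrix of $UV$, so the product is precisely the incidence matrix while the factors stay $\{0,1\}$-valued.
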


To prove lower bounds on discrepancy, we use \eqref{eq:lb} and a dual
formula for $\gamma_2$:
\begin{equation}
  \label{eq:dual}
  \gamma_2(A) = \max\{\|PAQ\|_{\tr}: P, Q \text{ diagonal }, \tr P^2 =
  \tr Q^2 = 1\},
\end{equation}
where $\|M\|_{\tr}$ is the trace norm of the matrix $M$, equal to the
sum of singular values. For a proof of this formula,
see~\cite{LeeSS08,apx-disc}. In this paper, we use the easily proved special
case derived from \eqref{eq:dual} by setting $P = Q = \frac1n I$:
\begin{equation}
  \label{eq:dual-simple}
  \gamma_2(A) \ge \frac{1}{n} \|A\|_{\tr}.
\end{equation}
See~\cite{LinialMSS07-signmatrices} for a short direct proof of this
inequality. 

The $\gamma_2$ norm is also directly connected to upper and lower
bounds for data structures and differentially private mechanisms for
range searching and range counting. It follows immediately from the
definitions (see Section~\ref{sect:ds}) that the update time $t_u$
and the query time $t_q$ for an oblivious range searching data
structure with multiplicity $\Delta$ satisfy $\sqrt{t_ut_q} \ge
\Delta^{-1}\gamma_2(\FF(P))$; moreover, if $\gamma_2(\FF(P))$ is
achieved by a factorization into matrices with entries in $\{-\Delta,
\ldots, \Delta\}$, then $\sqrt{t_u t_q} \le \gamma_2(\FF(P))$. In
differential privacy, we have the following theorem (see
Section~\ref{sect:dp} for the definitions).
\begin{theorem}\label{thm:dp-gamma2}
  There exists an absolute constant such that the following holds for
  all small enough $\varepsilon$, and $\delta$ small enough with
  respect to $\varepsilon$. 
  Let $\FF$ be a collection of subsets of $\R^d$ and let $P$ be an
  $N$-point set in $\R^d$. Then, 
  \[
  \frac{1}{C\log |\FF(P)|} \frac{\gamma_2(\FF(P))}{\varepsilon} 
  \le \mathrm{opt}_{\varepsilon, \delta}(\FF(P))
  \le 
  C\sqrt{(\log |\FF(P)|) (\log 1/\delta)}\frac{\gamma_2(\FF(P))}{\varepsilon} 
  \]
\end{theorem}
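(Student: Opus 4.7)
The plan is to prove the two inequalities separately using standard differential privacy techniques together with properties of the $\gamma_2$ norm already stated in the excerpt.

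For the upper bound, I would apply the Gaussian mechanism to a factorization that witnesses $\gamma_2(\FF(P))$. Take $A = UV$ with $\|U\|_{2\to\infty} = \gamma_2(\FF(P))$ and $\|V\|_{1\to 2} = 1$, and represent a database $D$ by its histogram $h \in \Z^P$ so that the true answer vector is $Ah$. The mechanism outputs $\MM(D) = Ah + Ug$, where $g \sim \mathcal{N}(0,\sigma^2 I)$ with $\sigma = C\varepsilon^{-1}\sqrt{\log(1/\delta)}$. Changing a single point of $D$ changes $Vh$ by a vector of $\ell_2$ norm at most $\|V\|_{1\to 2} = 1$, so the standard analysis of the Gaussian mechanism shows that the release of $Vh + g$, and hence of $U(Vh+g)$ by post-processing, is $(\varepsilon,\delta)$-differentially private. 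Each coordinate of $Ug$ is a centered Gaussian of variance at most $\sigma^2\|U\|_{2\to\infty}^2$, and a standard Gaussian maximal inequality over the $|\FF(P)|$ such variables bounds the expected maximum absolute value by $O(\sigma\|U\|_{2\to\infty}\sqrt{\log |\FF(P)|})$, which matches the stated upper bound after substituting the values of $\sigma$ and $\|U\|_{2\to\infty}$.

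For the lower bound, I plan to go through hereditary discrepancy and then apply inequality~\eqref{eq:lb}. The first step is to establish that, for small enough $\varepsilon$ and $\delta$ sufficiently small with respect to $\varepsilon$,
\[
\mathrm{opt}_{\varepsilon,\delta}(\FF(P)) \ge c\,\varepsilon^{-1}\herdisc \FF(P).
\]
To prove this, fix a subset $Q \subseteq P$ and a coloring $\chi\maps Q \to \{-1,+1\}$ witnessing $\herdisc\FF(P)$, use $\chi$ to build two correlated database distributions (via a fingerprinting-code construction), and argue via a hypothesis-testing / group-privacy argument that any $(\varepsilon,\delta)$-DP mechanism must incur additive error $\Omega(\varepsilon^{-1}\disc\FF(Q))$ on some range. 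Combining this with \eqref{eq:lb} and the bound $\rank A \le |\FF(P)|$ yields
\[
\mathrm{opt}_{\varepsilon,\delta}(\FF(P)) \ge c\,\varepsilon^{-1}\herdisc\FF(P) \ge c'\,\varepsilon^{-1}\frac{\gamma_2(\FF(P))}{\log |\FF(P)|},
\]
as claimed.

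The main obstacle is the conversion of a worst-case coloring witnessing $\herdisc\FF(P)$ into a hard database distribution in the approximate-DP regime. Simple packing arguments suffice for pure $\varepsilon$-DP, but when $\delta > 0$ one needs a fingerprinting-style construction robust to the additive $\delta$ slack in the privacy definition; getting the quantitative regime right (arbitrary small constant $\varepsilon$, and $\delta$ polynomially small in the appropriate parameters) is where the technical work sits, and is precisely where we would invoke existing machinery from the DP lower bounds literature rather than reprove it from scratch.
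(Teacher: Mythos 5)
Two remarks before the substance: the paper itself contains no proof of Theorem~\ref{thm:dp-gamma2}; it is quoted from~\cite{NTZ} and Theorem~7.1 of~\cite{thesis}. Your upper bound is exactly the argument used there (the factorization/Gaussian mechanism): factor $A=UV$ at the $\gamma_2$ optimum, add Gaussian noise to $Vh$, post-process by $U$, and bound the expected maximum of $|\FF(P)|$ Gaussians. Up to routine details (the $\ell_2$-sensitivity of $Vh$ is $2\|V\|_{1\to2}$ rather than $1$ under the ``move one point'' neighboring relation, and the standard privacy guarantee of the Gaussian mechanism for small $\varepsilon$), this half is correct and matches the cited proof.

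The lower bound, however, has a genuine gap at its crux, namely the asserted inequality $\mathrm{opt}_{\varepsilon,\delta}(\FF(P)) \ge c\,\varepsilon^{-1}\herdisc\FF(P)$. This is not a statement you can pull off the shelf, and the sketch you give for it does not work. First, the coloring $\chi$ witnessing $\herdisc$ lives on a subset $Q$ whose size is in general far larger than $1/\varepsilon$; the two databases you would build from its positive and negative parts differ in $|Q|$ records, and $(\varepsilon,\delta)$ group privacy degrades to $(k\varepsilon, k e^{k\varepsilon}\delta)$ after $k$ changes, so for $k\gg 1/\varepsilon$ the hypothesis-testing argument gives nothing --- this is precisely why packing-style arguments break for $\delta>0$. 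Second, the attacks that do survive $\delta>0$ (Dinur--Nissim-type reconstruction, and fingerprinting arguments, which in any case are tailored to specific families such as one-way marginals rather than an arbitrary $\FF(P)$) control a \emph{minimum}-type quantity: the adversary's error pattern is an arbitrary vector in $\{-1,0,1\}^{Q'}$ supported on an arbitrary subset $Q'$, so what one lower-bounds is a restricted least-singular-value or determinant quantity, not $\herdisc$, which only certifies that the full $\pm1$ colorings of one particular subset have large discrepancy. Accordingly, the actual proof behind the theorem lower-bounds $\mathrm{opt}_{\varepsilon,\delta}$ by $\varepsilon^{-1}$ times a spectral/determinant relaxation (a reconstruction attack run on $\Theta(1/\varepsilon)$-scaled ``super-records'' of a suitable submatrix) and then passes to $\gamma_2$ via the inequality underlying \eqref{eq:lb}, i.e.\ $\gamma_2(A) \le C\log(\rank A)\,\detlb(A)$; since $\detlb(A) = O(\herdisc A)$, your intermediate claim is strictly stronger than what that machinery delivers, so ``invoking existing machinery'' does not close the gap. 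To repair the proposal, replace the $\herdisc$ step by the determinant/spectral lower bound for $(\varepsilon,\delta)$-differential privacy and combine it with the factorization-norm-to-$\detlb$ inequality, which is exactly the route of the cited proof.
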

Theorem~\ref{thm:dp-gamma2} was essentially proved, although stated
differently, in~\cite{NTZ}. For a statement equivalent to the one
above, and a proof, see Theorem 7.1.~in~\cite{thesis}. Note
that~\cite{thesis} uses the notation $\|\cdot \|_{E_\infty}$ in place
of the standard $\gamma_2(\cdot)$.

\subsection{Signed Series of Vectors}

We will use the following result of Banaszczyk:
\begin{lemma}[\cite{Bana13}]\label{lm:bana-steinitz}
  Let $v_1, \ldots, v_n \in \R^m$, $\forall i: \|v_i\|_2 \le 1$, and
  let $K \subset \R^m$ be a convex body symmetric around the
  origin. If $\mu_m(K) \ge 1 - 1/(2n)$, then there exists an assignment of signs
  $\chi\maps[n] \to \{-1, 1\}$ so that
  \[
  \forall j \in \{1, \ldots, n\}: \sum_{i = 1}^j{\chi(i) v_i} \in 5K.
  \]
\end{lemma}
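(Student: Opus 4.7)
The plan is to prove the theorem by an iterative construction: choose the signs $\chi(1), \chi(2), \ldots, \chi(n)$ one at a time, maintaining at each step a Gaussian-measure potential that certifies the existence of a safe next move. I would define $\Phi_j(x) \eqdef \mu_m(cK - x)$ for a constant $c$ to be chosen (the target is $c = 5$), which measures how much Gaussian mass the scaled body $cK$ still has when its center is shifted to $x$. The starting value $\Phi_0(0) = \mu_m(cK) \ge \mu_m(K) \ge 1 - 1/(2n)$, so there is initially almost a full unit of Gaussian mass available, and I want to show that a suitable sign selection rule loses only $O(1/n)$ of this reserve per step.

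The core technical input is a convex-geometric lemma of the following shape: for any symmetric convex body $L \subset \R^m$, any $x \in \R^m$, and any $v$ with $\|v\|_2 \le 1$,
\[
\max\{\mu_m(L - x - v),\ \mu_m(L - x + v)\} \ge \mu_m(L - x) - O(1 - \mu_m(L)).
\]
I would try to derive this from the log-concavity of the Gaussian measure via the Pr\'ekopa-Leindler inequality, combined with the symmetry of $L$ and possibly the $B$-theorem of Cordero-Erausquin, Fradelizi, and Maurey. The intuition is that translating a near-full-measure symmetric body by a unit vector cannot shed much Gaussian mass in at least one of the two directions $\pm v$, because most of the mass lies near the origin where local perturbations are cheap.

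Given such a lemma, the iteration is: at step $j$, having defined $s_{j-1} \eqdef \sum_{i < j} \chi(i) v_i$, pick $\chi(j) \in \{-1, +1\}$ so as to maximize $\Phi_j(s_j)$. The lemma then yields $\Phi_j(s_j) \ge \Phi_{j-1}(s_{j-1}) - O(1 - \mu_m(K))$, and telescoping over $j \le n$ keeps $\Phi_n(s_n)$ strictly positive, since the total loss is $O(n) \cdot O(1/(2n)) = O(1)$ which can be arranged to be less than the initial budget. The last piece is to argue that $\Phi_j(s_j) > 0$ combined with the evolution forces $s_j \in 5K$ for every $j$: a partial sum escaping $5K$ would require $5K - s_j$ to contain most of its Gaussian mass far from the origin, which Anderson's inequality or direct comparison with the symmetric case rules out.

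The main obstacle I expect is pinning down the exact form of the potential and the sharp constant $c = 5$.  A naive bookkeeping of Gaussian losses yields a larger constant, and obtaining $5$ seems to require either invoking the $B$-theorem, using a refined potential depending on how many vectors remain, or a rescaling/truncation trick specific to Banaszczyk's approach. A secondary obstacle is ensuring that the per-step loss is genuinely $O(1/n)$ rather than $O(1/\sqrt n)$; this is exactly where the hypothesis $\mu_m(K) \ge 1 - 1/(2n)$ becomes tight and where any slack in the convexity lemma would be fatal.
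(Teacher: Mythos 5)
The paper does not prove this lemma at all: it is quoted from Banaszczyk~\cite{Bana13}, where it is deduced from the main theorem of~\cite{banasz98} by a backwards recursion that builds a sequence of auxiliary convex bodies (one per vector, chosen as a convex subset of $(K_i+v_i)\cup(K_i-v_i)$) and proves a Gaussian-measure preservation inequality for that construction; it is not a forward greedy potential argument. Your sketch therefore has to stand on its own, and it has a fatal gap: the ``core technical input'' is false. Take $L$ to be the slab $\{y\in\R^m: |y_1|\le t\}$ with $t$ chosen so that $\mu_m(L)=1-\delta$, take $x=0$ and $v=e_1$. By symmetry both signs give the same measure, and, writing $\Phi$ for the standard normal distribution function and $\phi$ for its density, the one-step loss is $\mu_m(L)-\mu_m(L-v)=\bigl(\Phi(t)-\Phi(t-1)\bigr)-\bigl(\Phi(t+1)-\Phi(t)\bigr)\approx \phi(t-1)/t$, whereas $1-\mu_m(L)\approx 2\phi(t)/t$. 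The ratio is about $e^{t-1/2}$, which is unbounded: with $\delta=1/(2n)$ we have $t=\Theta(\sqrt{\log n})$, so a single step can cost $e^{\Theta(\sqrt{\log n})}\cdot\delta$ rather than $O(\delta)$, and the telescoped budget $n\cdot O(1/n)$ does not control the potential. Since the counterexample is essentially one-dimensional, no appeal to Pr\'ekopa--Leindler or the $B$-theorem can restore an inequality of the claimed form; the loss under a unit shift is governed by the Gaussian mass near the boundary of $L$, not by $1-\mu_m(L)$.

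There is a second, independent gap at the end: $\mu_m(5K-s_j)>0$ does not force $s_j\in 5K$, since every translate of a body with non-empty interior has positive Gaussian measure, and Anderson's inequality only bounds shifted measures from above. The correct containment criterion is that for a symmetric convex body $L$, $\mu_m(L-x)>1/2$ implies $x\in L$ (otherwise $L-x$ and $L+x$ are disjoint sets of equal measure), so your potential would have to stay above $1/2$ at every step---which is exactly what the false per-step lemma was supposed to guarantee. These are not bookkeeping issues that a cleverer constant-chase fixes: as the paper itself notes, Banaszczyk's proof ``does not suggest any efficient algorithm to find the signs,'' and a greedy Gaussian-measure potential of the type you propose is precisely what is not known to work (the partial progress in~\cite{BansalDG16} uses a random walk with a substantially more delicate potential and does not recover this prefix statement with the constant $5$). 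If you want a self-contained proof, the route is Banaszczyk's own: prove the measure inequality for his body-by-body recursion and then read the signs off backwards.
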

This lemma was proved in the context of the well-known Steinitz
problem: given vectors $v_1, \ldots, v_n$, each of  Euclidean norm at
most $1$, such that $v_1 + \ldots + v_n = 0$, find a
permutation $\pi$ on $[n]$ such that for all integers $i$, $1 \le i
\le n$, $\|v_{\pi(1)} + \ldots + v_{\pi(i)}\|_2 \le C\sqrt{m}$, where
$C$ is an absolute constant independent of $m$ or
$n$. Lemma~\ref{lm:bana-steinitz} gives the best partial result in
this direction: it can be used to show a bound of $C(\sqrt{m} +
\sqrt{\log n})$ in place of $C\sqrt{m}$.

Lemma~\ref{lm:bana-steinitz} follows relatively easily from a
powerful result Banaszczyk proved in~\cite{banasz98}. Unfortunately,
the proof of the latter does not suggest any efficient algorithm to
find the signs $\chi(i)$, and no such algorithm is yet known, despite
some partial progress~\cite{BansalDG16}.

\subsection{Techniques}

Our approach builds on the connection between the $\gamma_2$ norm and
hereditary discrepancy shown in~\cite{apx-disc,disc-gamma2}. The new
idea which enables the tighter upper bound is the use of Banaszczyk's
signed series result (Lemma~\ref{lm:bana-steinitz}), in order to get
one dimension ``for free''. On a very high level, this is similar to
the way one dimension comes for free in constructions of point sets
with small geometric discrepancy, such as the Halton-Hammersley
construction mentioned before. The proof of
Theorem~\ref{thm:polytopes-ub} combines the ideas in the proof of
Theorem~\ref{thm:tusnady-ub} with a decomposition due to
Matou\v{s}ek~\cite{Matousek99}.

The lower bound for Tusn\'ady's problem in~\cite{disc-gamma2} relies
crucially on the product structure of $\AA_d$, and does not easily
extend to $\TT_B$ when $B$ is a polytope other than a box.  Instead,
for the lower bound in this paper, we combine the factorization norm
approach with the Fourier method, developed in discrepancy theory by
Beck: see~\cite{BeckChen} for an exposition. In order to give a lower
bound on $\gamma_2$, we use \eqref{eq:dual-simple}, and estimate the
trace norm of the incidence matrix $M$ of a set system related to
$\TT_B(P)$, where $P$ is a grid in $[0,1)^d$. We define $M$ so that it
is a convolution matrix, and its eigenvalues are given by the discrete
Fourier transform. While tight estimates are known for the average
decay of continuous Fourier coefficients of convex polytopes, we need
estimates on discrete Fourier coefficients, about which much less is
known. To bridge this gap, we prove a bound on the convergence rate of
discrete Fourier coefficients of convex polytopes to the continuous
Fourier coefficients. We also use an averaging argument in order to be
able to work with bounds on the average decay of the Fourier spectrum,
rather than having to estimate specific Fourier coefficients. These
techniques are general, and may be more widely applicable to geometric
combinatorial discrepancy questions. Our version of the Fourier method
has the curious feature that, even though we average over rotations of
the polytope $B$, in the end the lower bound holds for the set system
induced only by translations and dilations of $B$.

\section{Upper Bound for Tusn\'ady's Problem}


In this section we give the proof of Theorem~\ref{thm:tusnady-ub}.
  Let us fix the $n$-point set $P \subset \R^d$ once and for all. Without loss of
  generality, assume that each $p \in P$ has a distinct last
  coordinate, and order the points in $P$ in increasing order of their
  last coordinate as $p_1, \ldots, p_n$. Write each $p_i$ as $p_i =
  (q_i, r_i)$, where $q_i \in \R^{d-1}$ and $r_i \in \R$. With this
  notation, and the ordering we assumed, we have that $r_i < r_j$
  whenever $i <j$.

  Let $Q \eqdef \{q_i: 1\le i \le n\}$. Notice that this is an
  $n$-point set in $\R^{d-1}$. Denote the sets in
  $\AA_{d-1}(Q)$ as $A_1, \ldots, A_m$ (in no particular order). 
  By Lemma~\ref{lm:factor}, there exist vectors
  $u_1, \ldots, u_m$ and $v_1, \ldots, v_n$ such that
  \begin{equation}\label{eq:factor}
  \langle u_i, v_j\rangle = 
  \begin{cases}
    1 & q_j \in A_i\\
    0 & q_j \not \in A_i
  \end{cases},
  \end{equation}
  and $\|u_i\|_2 \le C_d(1+\log n)^{(d-1)}$, $\|v_j\|_2 \le 1$ for all
  $i$ and $j$. Define the
  symmetric polytope
  \[
  K \eqdef \{x \in \R^{m}: |\langle u_i, x \rangle| \le C'_d (1 + \log
  n)^{d-1/2}\ \forall i \in \{1, \ldots, m\}\},
  \]
  where $C'_d > C_d$ is a constant large enough that $\mu_{m}(K) \ge
  1 - 1/(2n)$. The fact that such a constant exists follows from
  standard concentration of measure results in Gaussian space. Indeed,
  using a Bernstein-type inequality for Gaussian measure, we can show
  that, for $C'_d$ big enough, $\mu_m(S_i) \ge 1 - 1/(2n^{d+1})$ for
  all $i\in [m]$, where $S_i \eqdef \{x: |\langle u_i, x \rangle| \le C'_d (1
  + \log n)^{d-1/2})\}$. By the union bound, since $m \le n^d$, this implies
  $\mu_m(K) = \mu_m(\bigcap_{i=1}^m S_i) \ge 1-1/2n$. 
  
  The body $K$ and the vectors $v_1, \ldots, v_n$ then satisfy the
  assumptions of Lemma~\ref{lm:bana-steinitz}, and, therefore, there
  exists an assignment of signs $\chi\maps [n] \to \{-1,1\}$ such that, for any
  $k$, $1 \le k \le n$,
  \[
  \sum_{j = 1}^k{\chi(j) v_j} \in 5K.
  \]
  By the definition of $K$,  this is equivalent to 
  \begin{equation}
    \label{eq:vect-balance}
    \forall i \in \{1, \ldots, m\}: 
    \left|\sum_{j = 1}^k{\chi(j) \langle u_i, v_j\rangle}\right| 
    \le 5C'_d (1 + \log  n)^{d-1/2}.
  \end{equation}

  For each $i$, $1\le i \le m$, let us define $A'_i = \{p_j: q_j \in
  A_i\}$. We claim that for any $x \in \R^d$, we can write $A(x) \cap
  P$ as $A'_i \cap \{p_1 , \ldots, p_k\}$ for some $i$ and $k$. (Here
  we assume that $A(x) \cap P$ is non-empty: the other case is
  irrelevant to the proof.) To see this, let $x = (y, x_d)$, where $y
  \in \R^{d-1}$ and $x_d \in \R$. Let $i$ be such that $A(y) \cap Q =
  A_i$, and let $k$ be the largest integer such that $r_k \le
  x_d$. Then:
  \[
  A(x) \cap P = \{p_j: q_j \in A(y), j \le k\} = A'_i \cap \{p_1,
  \ldots, p_k\}.
  \]
  It follows that 
  \begin{align*}
    \left|\sum_{j: p_j \in A(x)}{\chi(j)}\right| 
    &= \left|\sum_{j: q_j \in A_i, j \le  k}{\chi(j)}\right|
    =     \left|\sum_{j \le k}{\chi(j) \langle u_i, v_j\rangle}\right| 
    \le 5C'_d (1 + \log  n)^{d-1/2},
  \end{align*}
  where the penultimate equality follows from \eqref{eq:factor},  and
  the final inequality is \eqref{eq:vect-balance}. Since $x$ was arbitrary, we have shown that
  $\disc \AA_d(P) \le 5C'_d (1 + \log  n)^{d-1/2}$, as was required. 

\section{Upper Bound for an Arbitrary Polytope}

The main ingredient in extending Theorem~\ref{thm:tusnady-ub} to
arbitrary polytopes is a geometric decomposition due to
Matou\v{s}ek. To describe the decomposition we define the admissible
$k$-composition $\mathrm{AC}_k(\FF)$ of sets from a (finite) set system $\FF$ as
follows. For $k = 0$, $\mathrm{AC}_k(\FF) = \emptyset$; for an integer $k > 0$,
we have
\begin{align*}
  \mathrm{AC}_k(\FF) \eqdef &\{F_1 \cup F_2: F_1 \in \mathrm{AC}_{k_1}(\FF), F_2 \in  \mathrm{AC}_{k_2}(\FF), F_1 \cap F_2 = \emptyset, k_1 + k_2 = k\}\ \cup\\
  &\{F_1 \setminus F_2: F_1 \in \mathrm{AC}_{k_1}(\FF), F_2 \in  \mathrm{AC}_{k_2}(\FF), F_2 \subseteq F_1, k_1 + k_2 = k\}.
\end{align*}
By an easy induction on $k$, we see that 
\begin{align}
  \label{eq:combin}
  \disc \mathrm{AC}_k(\FF) &\le k\disc\FF,\\
  \label{eq:combin-gamma2}
  \gamma_2(\mathrm{AC}_k(\FF)) &\le k\gamma_2(\FF).
\end{align}

We also extend the notion of anchored boxes to 
``corners'' whose bounding hyperplanes are not necessarily
orthogonal. Let $W = \{w_1, \ldots, w_d\}$ be a basis of $\R^d$. Then
we define $\AA_W \eqdef \{A_W(x): x \in \R^d\}$, where $A_W(x) = \{y
\in \R^d: \langle w_i, y\rangle \le \langle w_i, x\rangle\ \forall i
\in [d]\}$. 

The following lemma gives the decomposition result we need.
\begin{lemma}[\cite{Matousek99}]\label{lm:decomp}
  Let $B \subset \R^d$ be a convex polytope. There exists a constant
  $k$ depending on $d$ and $B$, and $k$ bases $W_1, \ldots, W_k$ of
  $\R^d$ such that  every $B' \in \TT_B$ belongs to $\mathrm{AC}_k(\AA_{W_1} \cup
  \ldots \cup \AA_{W_k})$. Moreover, $e_1 \in W_1 \cap W_2 \cap \ldots
  \cap W_k$, where $e_1$ is the first standard basis vector of
  $\R^d$. 
\end{lemma}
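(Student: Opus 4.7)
My plan is to prove this lemma by combining triangulation of $B$, a sweep along the $e_1$-direction, and induction on the ambient dimension $d$.

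\emph{Reduction to a simplex.} First I would fix a triangulation of $B$ into a combinatorially-determined set of $d$-simplices $S_1,\ldots,S_N$ with $N$ depending only on $B$. Any $B' = tB+x \in \TT_B$ admits the analogous triangulation into $tS_i+x$, and since these pieces are disjoint up to lower-dimensional faces, $B'$ is expressible as the disjoint union of them via $N-1$ admissible unions. Hence it suffices to prove the lemma for a single simplex $S_i$.

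\emph{Reduction to a polytope with an $e_1$-perpendicular top.} Given a single $d$-simplex $S$, I would sweep hyperplanes orthogonal to $e_1$ through each of its $d+1$ vertices. This subdivides $S$ into at most $d$ slabs bounded by consecutive critical heights $L_j < L_{j+1}$; each slab equals $S^{\le L_{j+1}}\setminus S^{\le L_j}$, where $S^{\le L}\eqdef S\cap\{y:\langle e_1,y\rangle\le L\}$. So it further suffices to express each $S^{\le L}$ as an admissible composition of corners $A_W(\cdot)$ with $e_1\in W$.

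\emph{Induction on dimension.} The base $d=1$ is immediate: a truncated interval in $\R$ is an admissible difference of two rays from $\AA_{\{e_1\}}$. For $d\ge 2$, I would project $S^{\le L}$ onto $e_1^\perp$ to obtain a $(d-1)$-dimensional polytope $T$, and subdivide $T$ into pieces on which the ``top'' function $h(y')=\min_i \ell_i(y')$ (with $\ell_i$ affine functions coming from the facet $\{y_1=L\}$ and the upper facets of $S$) is a single affine map. Over each piece, $S^{\le L}$ is a prism whose only top facet has normal $e_1$. Applying the $(d-1)$-dimensional case of the lemma to the piece in $e_1^\perp$ and lifting each corner $A_{W'}(y') \subset \R^{d-1}$ to $A_{\{e_1\}\cup W'}(y)\subset\R^d$ (with $y$ determined by $y'$ and $h(y')$) gives an admissible composition with $e_1$ in every basis. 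All the constants involved --- number of triangulation simplices, number of slabs, number of affine pieces of $T$, and depth --- are combinatorial invariants of $B$ alone, yielding the required dependence of $k$ on $d$ and $B$.

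\emph{Main obstacle.} The hardest step will be managing the sign of $e_1$ in each basis, since the sweep naturally produces cone bases containing $-e_1$ rather than $+e_1$ for ``bottom'' pieces. I would handle this via the set identity $A_{W^-}(v)=A_{W^+}(u)\setminus A_{W^+}(v')$, where $W^\pm$ differ only in the sign of the $e_1$ entry, $v'$ agrees with $v$ on the non-$e_1$ constraints, and $u$ is chosen so that its $e_1$-coordinate exceeds the $e_1$-extent of $B'$ (making the new $e_1$-constraint of $A_{W^+}(u)$ inactive on $B'$). The specific $u$ depends on $B'$, but the basis $W^+$ and the added depth are determined by $B$ and $d$, which is all the lemma requires.
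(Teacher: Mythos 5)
Your overall architecture (triangulate $B$, sweep along $e_1$, induct on dimension) is in the spirit of the recursive decomposition of Matou\v{s}ek that the paper simply cites (the paper proves nothing here beyond remarking that the ``moreover'' clause can be read off from that recursion), but the inductive step as you describe it does not work, and the failure is exactly at the heart of the lemma. Every corner you produce in the lifting step has basis $\{e_1\}\cup W'$ with $W'\subset e_1^{\perp}$ and is anchored at a point whose first coordinate is a single number $h(y')$ evaluated at the apex; such a corner is a vertical cylinder over a $(d-1)$-dimensional corner, capped by a hyperplane of the form $\{y: y_1=c\}$. Hence the boundary of \emph{any} finite admissible composition of such corners is contained in finitely many hyperplanes that either have normal $e_1$ or contain the direction $e_1$. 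A facet of $S^{\le L}$ whose normal is neither parallel nor orthogonal to $e_1$ (a generic upper or lower facet of the simplex) lies in no such hyperplane, so the composition differs from $S^{\le L}$ on a set of positive measure near that facet; this is not a removable boundary technicality. Concretely, in $d=2$ with piece $R=[0,1]$ and top $h(y')=y'$, your lift yields $\{y_2\le 1,\,y_1\le 1\}\setminus\{y_2\le 0,\,y_1\le 0\}$, which contains points with $y_1>y_2$ and so is not the region under the slanted top. Relatedly, the claim ``over each piece, $S^{\le L}$ is a prism whose only top facet has normal $e_1$'' is true only for the piece lying under the flat cut $\{y_1=L\}$, and the lower (slanted) facets of $S$ are never treated at all. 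What the lemma actually requires --- and what Matou\v{s}ek's recursion delivers --- is that the $d-1$ non-$e_1$ vectors in each basis be (generally non-orthogonal to $e_1$) directions carving the slanted facets of $B$, with the recursion organized so that the $e_1$-constraint can always be appended to each basis; since the jump set of any $\mathrm{AC}_k$-composition is contained in the union of the corners' bounding hyperplanes, every facet direction of $B$ must appear among the chosen basis vectors, and your construction never puts them there.

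There are also secondary closedness issues that a complete write-up must address, though they are of a different (and standard) nature: closed simplices of a triangulation share facets and are not disjoint, so the ``$N-1$ admissible unions'' step needs half-open conventions or a perturbation argument; your base case gives the half-open interval $(a,b]$, not $[a,b]$; and the identity $A_{W^-}(v)=A_{W^+}(u)\setminus A_{W^+}(v')$ fails on the hyperplane $\{y:\langle e_1,y\rangle=\langle e_1,v\rangle\}$ (the left-hand side contains it, the right-hand side does not) and above height $\langle e_1,u\rangle$. The same caveat is implicitly present in the literal statement of the lemma (e.g.\ when $B$ has a facet with outer normal $-e_1$, exact set equality with closed corners all containing $+e_1$ is impossible), and in this literature it is handled by half-open corners or by perturbing anchors relative to the given finite point set, so it is not the main defect of your proposal; the lifting step is.
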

Matou\v{s}ek does not state the condition after ``moreover'';
nevertheless, it is easy to verify this condition holds for the
recursive decomposition in his proof of Lemma~\ref{lm:decomp}.

We will also need a bound on $\gamma_2(\AA_W(P))$ for a basis $W$ and
an $n$-point set $P$. 

\begin{lemma}\label{lm:factor-W}
  For any $\ell \ge 1$ there exists a constant $C_\ell$ such that the
  following holds.  For any set $W$ of $\ell$ linearly independent
  vectors in $\R^d$, and any $n$-point set $P$, $\gamma_2(\AA_W(P))
  \le C_\ell (1+\log n)^{\ell}$. 
\end{lemma}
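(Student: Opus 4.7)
The plan is to reduce to Lemma~\ref{lm:factor} by a linear change of coordinates that converts the cones $A_W(x)$ into anchored axis-aligned corners in $\R^\ell$. First I would define the linear map $T \maps \R^d \to \R^\ell$ by $T(y) \eqdef (\langle w_1, y\rangle, \ldots, \langle w_\ell, y\rangle)$, which is surjective because $w_1, \ldots, w_\ell$ are linearly independent. The key observation is that $y \in A_W(x)$ if and only if $T(y)_i \le T(x)_i$ for every $i \in [\ell]$, i.e.\ if and only if $T(y) \in A(T(x))$, where $A(\cdot)$ denotes the anchored corner in $\R^\ell$. Consequently $A_W(x) \cap P = \{p \in P : T(p) \in A(T(x))\}$ for every $x \in \R^d$.

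Next I would let $P' \eqdef T(P) \subset \R^\ell$, regarded as a set of $|P'| \le n$ distinct points. Because $T$ is surjective, as $x$ ranges over $\R^d$ the image $T(x)$ sweeps out all of $\R^\ell$, so the set system $\AA_W(P)$ is naturally identified, via $p \mapsto T(p)$, with $\AA_\ell(P')$; equivalently, the incidence matrix of $\AA_W(P)$ is obtained from that of $\AA_\ell(P')$ by duplicating the columns corresponding to points of $P$ that collide under $T$. Since duplicating columns of a matrix leaves its $\gamma_2$ norm unchanged (the same factorization vector can be reused for every copy of a repeated column, as is transparent from the formulation $a_{ij} = \langle u_i, v_j\rangle$ with $\|u_i\|_2 \le t$, $\|v_j\|_2 \le 1$), this gives $\gamma_2(\AA_W(P)) = \gamma_2(\AA_\ell(P'))$. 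Invoking Lemma~\ref{lm:factor} in dimension $\ell$ then yields $\gamma_2(\AA_\ell(P')) \le C_\ell(1 + \log |P'|)^\ell \le C_\ell (1 + \log n)^\ell$, as required.

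The argument has essentially no difficult step: the only mild subtlety is that $T$ need not be injective when $\ell < d$, which is dealt with by the column-duplication observation above. Morally, the lemma says that $\AA_W$ in $\R^d$ is just $\AA_\ell$ read in the coordinate system dual to $W$, so it inherits the bound of Lemma~\ref{lm:factor} for free, with the constant $C_\ell$ depending only on $\ell$ and not on the particular choice of $W$ or on $d$.
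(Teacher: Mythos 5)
Your proof is correct and is essentially the paper's own argument: your map $T$ is exactly the adjoint $u^*$ of the map sending $e_i \mapsto w_i$ used in the paper, reducing $\AA_W(P)$ to $\AA_\ell$ of the image point set and invoking Lemma~\ref{lm:factor}. Your explicit handling of possible collisions under $T$ via column duplication is a point the paper glosses over, but it is the same approach.
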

\begin{proof}
  Let $W = \{w_1, \ldots, w_\ell\}$, and let $u$ be the linear
  transformation from $\R^\ell$ to $\R^d$ that sends the $i$-th
  standard basis vector $e_i$ to $w_i$ for each $i \in [\ell]$. Let $Q
  = u^*(P) \eqdef \{u^*(p): p \in P\}$, where $u^*$ is the adjoint of
  $u$. It is easy to verify that $\AA_W(P) = \AA_\ell(Q)$, and,
  therefore,
  \[
  \gamma_2(\AA_W(P)) = \gamma_2(\AA_d(Q))  \le C_\ell (1+\log n)^{\ell},
  \]
  where the final inequality follows from Lemma~\ref{lm:factor}.
\end{proof}

As a warm-up, let us first prove an upper bound on
$\gamma_2(\TT_B(P))$. 

\begin{theorem}\label{thm:polytopes-ub-gamma2}
  For any $d \ge 1$ and any closed convex polytope $B \subset \R^d$
  there exists a constant $C_{d,B}$ such that for any set $P$ of $n$
  points in $\R^d$,
  \[
  \gamma_2(\TT_B(P)) \le C_{d,B}(1+\log n)^d.
  \]
\end{theorem}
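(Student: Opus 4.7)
The plan is to string together the tools already assembled: the Matou\v{s}ek decomposition from Lemma~\ref{lm:decomp}, the factorization bound for oblique corners from Lemma~\ref{lm:factor-W}, and the two properties of $\gamma_2$ recorded in~\eqref{eq:union} and~\eqref{eq:combin-gamma2}. The argument is essentially a bookkeeping exercise once these are in hand.

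First I would apply Lemma~\ref{lm:decomp} to produce an integer $k = k(d,B)$ and bases $W_1, \ldots, W_k$ of $\R^d$ so that every $B' \in \TT_B$ lies in $\mathrm{AC}_k(\AA_{W_1} \cup \ldots \cup \AA_{W_k})$. Since the admissible composition uses only unions and set-differences, intersecting with $P$ commutes with all of these operations, so I also get $B' \cap P \in \mathrm{AC}_k(\FF_0(P))$ where $\FF_0 \eqdef \AA_{W_1} \cup \ldots \cup \AA_{W_k}$. In other words, $\TT_B(P) \subseteq \mathrm{AC}_k(\FF_0(P))$, and, since $\gamma_2$ is monotone under passing to subsystems (adding rows can only increase the norm, and removing them can only decrease it), $\gamma_2(\TT_B(P)) \le \gamma_2(\mathrm{AC}_k(\FF_0(P)))$.

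Next I would bound $\gamma_2(\FF_0(P))$. Each $\AA_{W_i}$ is a family of oblique corners defined by the $d$ linearly independent vectors in $W_i$, so Lemma~\ref{lm:factor-W} (with $\ell = d$) yields $\gamma_2(\AA_{W_i}(P)) \le C_d(1+\log n)^d$. Combining these $k$ bounds through the union inequality \eqref{eq:union} gives
\[
\gamma_2(\FF_0(P)) \le \sqrt{\sum_{i=1}^k \gamma_2(\AA_{W_i}(P))^2} \le \sqrt{k}\, C_d (1+\log n)^d.
\]
Finally I apply the admissible-composition estimate \eqref{eq:combin-gamma2} to get
\[
\gamma_2(\TT_B(P)) \le \gamma_2(\mathrm{AC}_k(\FF_0(P))) \le k\, \gamma_2(\FF_0(P)) \le k^{3/2} C_d (1+\log n)^d,
\]
which, absorbing the $k^{3/2}$ into a constant $C_{d,B}$ depending only on $d$ and $B$, is exactly the claim.

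There is no real obstacle here; the only thing to check carefully is the commutation of intersection-with-$P$ with the $\cup$ and $\setminus$ operations that build up $\mathrm{AC}_k$, which is immediate. The genuine content lies upstream in Lemmas~\ref{lm:decomp} and~\ref{lm:factor-W}, together with the multiplicativity of $\gamma_2$ under admissible compositions; the statement being proved is essentially the clean packaging of these facts.
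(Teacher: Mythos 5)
Your proposal is correct and follows essentially the same route as the paper: Lemma~\ref{lm:decomp} to reduce to $\mathrm{AC}_k(\AA_{W_1}\cup\ldots\cup\AA_{W_k})$, Lemma~\ref{lm:factor-W} plus \eqref{eq:union} to bound $\gamma_2$ of the union, and \eqref{eq:combin-gamma2} with monotonicity of $\gamma_2$ to conclude with the same $k^{3/2}C_d(1+\log n)^d$ bound. Your explicit check that intersecting with $P$ commutes with the union and set-difference operations building $\mathrm{AC}_k$ is a detail the paper leaves implicit, but it is the same argument.
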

\begin{proof}
  Let $W_1, \ldots, W_k$ be as in Lemma~\ref{lm:decomp}. By
  \eqref{eq:union} and Lemma~\ref{lm:factor-W}, 
  \[
  \gamma_2(\AA_{W_1}(P) \cup \ldots \cup \AA_{W_k}(P))
  \le\sqrt{k}C_d(1 + \log n)^{d}.
  \]
  By Lemma~\ref{lm:decomp}, $\TT_B(P) \subseteq
  \mathrm{AC}_k(\AA_{W_1}(P) \cup \ldots \cup \AA_{W_k}(P))$. Together
  with \eqref{eq:combin-gamma2} and the trivial fact that
  $\gamma_2(\FF') \le \gamma_2(\FF)$ whenever $\FF' \subseteq \FF$,
  this implies
  \begin{align*}
  \gamma_2(\TT_B(P)) &\le 
  \gamma_2(\mathrm{AC}_k(\AA_{W_1}(P) \cup \ldots \cup \AA_{W_k}(P)))\\
  &\le k \gamma_2(\AA_{W_1}(P) \cup \ldots \cup \AA_{W_k}(P))\\
  &\le k\sqrt{k}C_d(1 + \log n)^{d}.
  \end{align*}
  Since $k$ depends on $d$ and $B$ only, this finishes the proof of
  the theorem. 
\end{proof}

\begin{proof}[Proof of Theorem~\ref{thm:polytopes-ub}]
As in the proof of
Theorem~\ref{thm:tusnady-ub}, we fix the $n$-point set $P \subset
\R^d$ once and for all, and we order $P$ as $p_1, \ldots, p_n$ in
increasing order of the last coordinate. We write each $p_i$ as $p_i =
(q_i, r_i)$ for $q_i \in \R^{d-1}$ and $r_i \in \R$, and define $Q \eqdef
\{q_i: 1 \le i \le n\}$.

Let $W_1, \ldots, W_k$ be as in Lemma~\ref{lm:decomp}, and let $W'_i = W_i
\setminus \{e_1\}$ for each $i$, $1 \le i \le k$. Observe that $W'_i$
is a set of $d-1$ linearly independent vectors, and, by \eqref{eq:union} and
Lemma~\ref{lm:factor-W},
\[
\gamma_2(\AA_{W'_1}(Q) \cup \ldots \cup \AA_{W'_k}(Q))
 \le\sqrt{k}C_d(1 + \log n)^{d-1}.
\]
By an argument using Lemma~\ref{lm:bana-steinitz} analogous to the one
used in the proof of Theorem~\ref{thm:tusnady-ub}, we can then show that
there exists a constant $C'_{d,B}$ depending on $B$ and $d$ and a
coloring $\chi\maps [n] \to \{-1, 1\}$, such that for any integer $i$, $1 \le
i \le k$, and any $x \in \R^d$,
\[
\left| \sum_{j: p_j \in A_{W_i}(x)}{\chi(j)} \right| \le
C'_{d,B}(1+\log n)^{d-1/2}.
\]
Here $C'_{d,B}$ is implicitly assumed to depend on $k$ as well, which
depends on $B$ and $d$. This establishes that $\disc\FF \le
C'_{d,B}(1+\log n)^{d-1/2}$, where $\FF = \AA_{W_1}(P) \cup \ldots
\cup \AA_{W_k}(P)$. Because, by Lemma~\ref{lm:decomp}, $\TT_B(P) \subseteq
\mathrm{AC}_k(\FF)$, \eqref{eq:combin} implies
\[
\disc\TT_B(P) \le \disc \mathrm{AC}_k(\FF) \le k\disc\FF \le kC'_{d,B}(1+\log
n)^{d-1/2}. 
\]
This finishes the proof of the theorem.
\end{proof}

The same asymptotic bound with $\TT_B$ replaced by $\mathrm{POL}(\HH)$
for a family of hyperplanes $\HH$ can be proved by replacing
Lemma~\ref{lm:decomp} with an analogous decomposition lemma for
$\mathrm{POL}(\HH)$, also proved in~\cite{Matousek99}.

The upper bound on $\sqrt{t_ut_q}$ in Theorem~\ref{thm:ds} follows
from Theorem~\ref{thm:polytopes-ub-gamma2} and the observation that
the upper bound on $\gamma_2$ can be achieved by a factorization into
matrices with entries in $\{0,1\}$, which is equivalent to an
oblivious data structure with multiplicity $1$. The upper bound on
error in Theorem~\ref{thm:dp} follows from
Theorems~\ref{thm:dp-gamma2}~and~\ref{thm:polytopes-ub-gamma2}.

\section{Lower Bound}\label{sect:lb}

\junk{Let $T^d \eqdef (\R/\Z)^d$ be the
$d$-dimensional torus. We identify $T^d$ with the unit cube $[0, 1)^d$
in the natural way, i.e.~any coset in $(\R/\Z)^d$ is identified with
its unique representative in $[0, 1)^d$, and we endow $T^d$ with the
standard inner product and Euclidean norm inherited from $\R^d$. The
operations $x\pm y$ for $x,y\in T^d$ are interpreted as $(x\pm y)_i =
(x_i \pm y_i)\bmod 1$.}

In this section we prove lower bounds on $\disc(n,\TT_B)$ matching the
known lower bounds on $\disc(n,\AA_d)$ up to constants when $B$ is a
generic convex polytope (``generic'' is defined below). In order to
use Fourier analytic techniques, it will be convenient to work with a
modification of the incidence matrix of $\TT_B(P)$. To this end, let
us call a function $f\maps\R^d \to \R$ periodic if for every $x \in
[0,1)^d$ and every vector $y$ in the integer lattice $\Z^d$ we have
$f(x) = f(x+y)$. Let $Q_n \eqdef \{\frac{i}{n}\}_{i = 0}^{n-1}$. For a
periodic function $f\maps \R^d \to \R$, define a real $n^d\times n^d$
matrix $M(f,n)$ indexed by $Q_n^d$:
\[
m_{x,y}(f,n) \eqdef f(x-y),
\]
where, $x$ and $y$ range over $Q_n^d$.  \junk{$M(f,n)$ is the matrix
  (with respect to the standard basis) of the linear map given by
  convolution with the restriction $f|_{Q_n^d}$. } Of special interest
to us are periodic functions defined by convex polytopes $B \subseteq
[0,1)^d$. Let us define the function $f_B\maps\R^d \to R$ to be equal
to the indicator function $1_B$ of $B$ on $[0,1)^d$, extended
periodically to the rest of $\R^d$. For convenience, we use the
notation $M(B,n) \eqdef M(f_B, n)$.

There are two main observations that motivate studying
$M(B,n)$. First, each row of $M(B,n)$ is the indicator vector of the
disjoint union of most $2^d$ sets from $\TT_{-B}(Q_n^d) =
\TT_B(-Q_n^d) = \TT_B(Q_n^d)$, so any lower bound on the hereditary
discrepancy of $M(B,n)$ implies a lower bound on $\disc(n,\TT_B)$.
Second, because $M(B,n)$ is the matrix of a convolution operator, it
is diagonalized by the (discrete multidimensional) Fourier transform,
and we can use known results on the Fourier spectra of convex
polytopes to derive bounds on the trace norm of $M(B,n)$, and
therefore on $\gamma_2(M(B,n))$. 


Before we continue, let us introduce the standard notation for the
Fourier coefficients. For the
remainder of this section, we use $i = \sqrt{-1}$ to denote the
imaginary unit.
For any $u \in \R^d$ and a periodic function $f\maps \R^d \to \R$
integrable on $[0,1)^d$ we define the Fourier coefficient
\[
\hat{f}(\xi) \eqdef \int_{[0,1)^d}{f(x) e^{-2\pi i\langle \xi, x \rangle}dx}.
\]
We also define the discrete Fourier coefficients:
\[
\tilde{f}(\xi,n) \eqdef 
\frac{1}{n^d} \sum_{q \in Q_n}{f(x) e^{-2\pi i  \langle \xi, q\rangle}}.
\]
It is well known, and easy to verify, that the eigenvalues of $M(B,n)$
are given by $n^d \tilde{f}_B(\xi,n)$. There is quite a bit known
about the continuous Fourier coefficients $\hat{f}_B(\xi)$, but
comparatively less known about the discrete Fourier
coefficients. Intuitively, bounds on the Fourier coefficients are
easier to prove in the continuous domain because powerful tools like
the divergence theorem are available. In order to use the known bounds
on $\hat{f}_B(\xi)$, we estimate the rate of convergence of
$\tilde{f}_B(\xi,n)$ to $\hat{f}_B(\xi)$ with $n$. We follow an
approach similar to that used by Epstein~\cite{Epstein05} in the
one-dimensional setting. In order to adapt his results to our setting,
we need two additional ingredients: a higher-dimensional analog of
Jackson's theorem in approximation theory, and a continuous
approximation to the indicator function $f_B$. Next, we state the
higher-dimensional Jackson-type theorem (due to Yudin, also spelled
Judin) that we use.

\begin{definition}
  The \emph{modulus of continuity} of a  continuous function $f\maps
  \R^d \to \R$ is defined as
  $\omega(f,\delta) \eqdef \sup\{|f(x+t) - f(x)|: x,t\in \R^d,
  \|t\|_2\le \delta\}$.
\end{definition}

\begin{theorem}[\cite{Judin76}]\label{thm:apx}
  There exists a
  universal constant $C$ such that for any function $f$ and any
  integer $n \ge 1$ there exist an order $n$ trigonometric polynomial
  $p$ defined by
  \[
  p(x) = 
  \sum_{\nu \in \Z^d: \|\nu\|_\infty \le n}{c_\nu e^{2\pi i \langle \nu,x\rangle}}
  \]
  such that 
  \[
  \|f-p\|_\infty \eqdef 
  \sup_{x \in [0,1)^d}{|f(x) - p(x)|} \le  4\omega\left(f,\frac{C\sqrt{d}}{n}\right).
  \]
\end{theorem}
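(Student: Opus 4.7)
The plan is the classical approach to Jackson-type theorems: construct $p$ as the convolution of $f$ with a non-negative trigonometric kernel $K_n$ that has unit mass, the correct degree, and small second moment, and then bound $\|f-p\|_\infty$ through the modulus of continuity. In one dimension the natural choice is the Jackson kernel $J_n$ on the circle $\R/\Z$: a non-negative trigonometric polynomial of degree at most $n$ with $\int J_n = 1$ and $\int t^2 J_n(t)\,dt \le C_0/n^2$, where the integration is taken over a fundamental domain such as $[-1/2,1/2)$ so that $t$ really represents the distance from the origin on the circle. This is classical and I would simply cite it. Because the frequencies allowed in $p$ are constrained by $\|\nu\|_\infty \le n$ rather than by $\|\nu\|_1 \le n$, I would take the tensor product $K_n(x) \eqdef \prod_{j=1}^d J_n(x_j)$, which remains non-negative with unit mass and has degree at most $n$ in every variable separately, which is exactly the required ``box'' degree constraint.

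Setting $p \eqdef K_n \ast f$, the triangle inequality immediately yields
\[
|p(x)-f(x)| \le \int K_n(t)\,|f(x-t)-f(x)|\,dt \le \int K_n(t)\,\omega(f,\|t\|_2)\,dt.
\]
The tensor structure of $K_n$, combined with $\int J_n = 1$ on each axis, gives $\int K_n(t)\|t\|_2^2\,dt = d\int t^2 J_n(t)\,dt \le C_0 d/n^2$ by Fubini, and Cauchy--Schwarz then yields $\int K_n(t)\|t\|_2\,dt \le \sqrt{C_0 d}/n$. Applying the elementary subadditivity estimate $\omega(f,s) \le (1+s/\delta)\,\omega(f,\delta)$ with $\delta = \sqrt{C_0 d}/n$, the right-hand side is bounded by $2\,\omega(f,C\sqrt d/n)$ with $C = \sqrt{C_0}$, comfortably within the advertised factor of $4$.

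The only nontrivial ingredient is the Jackson kernel $J_n$ itself, which is classical and can be taken as a suitably normalized version of $\bigl(\sin(\pi N t)/(N\sin\pi t)\bigr)^4$ for $N \approx n/2$; I would cite it rather than rederive it. The main conceptual point, and essentially the only mild obstacle, is to get the correct $\sqrt d$ scaling of the first moment of $K_n$: routing through the second moment is essential, because the one-dimensional identities $\int J_n |t|\,dt = O(1/n)$ combined with $\|t\|_2 \le \sum_j |t_j|$ would give only an $O(d/n)$ bound on $\int K_n\|t\|_2\,dt$. The second moments of the tensor kernel add cleanly by Fubini, and Cauchy--Schwarz loses exactly one square root in passing from $L^2$ to $L^1$, producing the desired $\sqrt d/n$ and hence the $C\sqrt d/n$ appearing in the statement.
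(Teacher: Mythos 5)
Your proposal is correct, and it is worth noting that the paper does not prove this statement at all: Theorem~\ref{thm:apx} is imported wholesale from Yudin's work \cite{Judin76}, so your argument is a genuinely independent, self-contained derivation rather than a reconstruction of a proof in the paper. Your route is the classical one: a tensor product $K_n(x)=\prod_{j=1}^d J_n(x_j)$ of one-dimensional Jackson kernels is nonnegative, has unit mass, and has Fourier support exactly in the box $\{\nu:\|\nu\|_\infty\le n\}$, which matches the degree constraint in the statement; the bound $|f(x)-(K_n\ast f)(x)|\le\int K_n(t)\,\omega(f,\|t\|_2)\,dt$, the Fubini computation $\int K_n(t)\|t\|_2^2\,dt\le C_0 d/n^2$, Cauchy--Schwarz, and the subadditivity estimate $\omega(f,s)\le(1+s/\delta)\,\omega(f,\delta)$ with $\delta=\sqrt{C_0 d}/n$ then give $\|f-K_n\ast f\|_\infty\le 2\,\omega\bigl(f,C\sqrt d/n\bigr)$, which is even stronger than the stated factor $4$. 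Your observation about why one must route through the second moment is exactly the right point: summing first moments coordinatewise would only give $O(d/n)$, whereas the $\sqrt d/n$ scaling is what the paper's application (Lemma~\ref{lm:fourier-conv-cont} and, downstream, Lemma~\ref{lm:fourier-conv-polytope}) relies on, since there $\sqrt d$ is absorbed into dimension-dependent constants. What the citation to Yudin buys over your argument is essentially sharper constants (Yudin's theorem is tuned toward optimal constants in the multidimensional Jackson inequality), but for the purposes of this paper, where only the qualitative form $\omega(f,C\sqrt d/n)$ with a universal $C$ matters, your elementary tensor-kernel proof fully suffices; the only implicit hypotheses you should state explicitly are that $f$ is continuous and $\Z^d$-periodic, which is how the theorem is used in the paper.
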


The next lemma is an analogue of Theorem~3.1.~in~\cite{Epstein05}. 

\begin{lemma}\label{lm:fourier-conv-cont}
  There exists a universal constant $C$ such that the following
  holds. Let $f\maps \R^d \to \R$ be a continuous periodic function
  with modulus of continuity $\omega(f,\delta)$. Then, for any $\xi \in \Z^d$ such that
  $\|\xi\|_\infty \le n$, we have
  \[
  |\tilde{f}(\xi, 2n+1) - \hat{f}(\xi)| \le
  8\omega\left(f,\frac{C\sqrt{d}}{n}\right).
  \]
\end{lemma}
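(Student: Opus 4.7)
The plan is to prove the lemma via a three-term triangle inequality, interposing between $\hat f(\xi)$ and $\tilde f(\xi, 2n+1)$ the continuous and discrete Fourier coefficients of a trigonometric polynomial approximant $p$ to $f$ supplied by Theorem~\ref{thm:apx}. The entire argument will hinge on one exact identity: for any trigonometric polynomial of coordinate-degree at most $n$, the discrete Fourier coefficient on the grid $Q_{2n+1}^d$ agrees with the continuous Fourier coefficient at every frequency $\xi$ with $\|\xi\|_\infty \le n$. This is the discrete analogue of the Nyquist condition, and the oversampling by a factor slightly greater than $2$ per dimension is exactly what rules out aliasing.

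Concretely, I would first invoke Theorem~\ref{thm:apx} to obtain an order-$n$ trigonometric polynomial $p(x) = \sum_{\nu \in \Z^d, \|\nu\|_\infty \le n} c_\nu e^{2\pi i \langle \nu, x\rangle}$ with $\|f-p\|_\infty \le 4\omega(f, C\sqrt d/n)$. Then I would compute
\[
\tilde p(\xi, 2n+1) = \sum_{\nu: \|\nu\|_\infty \le n} c_\nu \cdot \frac{1}{(2n+1)^d}\sum_{q \in Q_{2n+1}^d} e^{2\pi i \langle \nu - \xi, q\rangle},
\]
factor the inner sum as a product of one-dimensional geometric sums, and observe that each factor $\frac{1}{2n+1}\sum_{k=0}^{2n}e^{2\pi i (\nu_j - \xi_j) k/(2n+1)}$ vanishes unless $\nu_j \equiv \xi_j \pmod{2n+1}$. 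Since $|\nu_j - \xi_j| \le 2n < 2n+1$, the only surviving term is $\nu = \xi$, giving $\tilde p(\xi, 2n+1) = c_\xi = \hat p(\xi)$.

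With this exact identity in hand, the triangle inequality
\[
|\tilde f(\xi, 2n+1) - \hat f(\xi)| \le |\tilde f(\xi, 2n+1) - \tilde p(\xi, 2n+1)| + |\hat p(\xi) - \hat f(\xi)|
\]
reduces everything to sup-norm control. Both remaining terms are bounded by $\|f-p\|_\infty$: the continuous coefficient difference by the trivial estimate $|\widehat{f-p}(\xi)| \le \|f-p\|_\infty$ (since $[0,1)^d$ has unit measure), and the discrete one by $\frac{1}{(2n+1)^d}\sum_{q}|f(q)-p(q)| \le \|f-p\|_\infty$. Adding the two bounds and inserting Theorem~\ref{thm:apx} gives the claimed $8\omega(f, C\sqrt d/n)$.

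The only real subtlety is the choice of $2n+1$ sample points per dimension rather than $n$: this is precisely what forces $|\nu_j - \xi_j| < 2n+1$ and thus eliminates aliasing. If one tried to prove the lemma with $\tilde f(\xi, n)$ in place of $\tilde f(\xi, 2n+1)$, the identity $\tilde p = \hat p$ would fail, since multiple Fourier modes of $p$ would collapse onto the same grid frequency. No genuinely difficult step remains after this observation, so I expect the proof to be short, with the main work already done in citing Theorem~\ref{thm:apx}.
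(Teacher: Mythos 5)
Your proposal is correct and follows essentially the same route as the paper: approximate $f$ by the Yudin trigonometric polynomial $p$ from Theorem~\ref{thm:apx}, use the no-aliasing identity $\tilde{p}(\xi,2n+1)=\hat{p}(\xi)=c_\xi$ for $\|\xi\|_\infty\le n$ (which the paper dismisses as an ``elementary calculation'' and you spell out via the factored geometric sums), and then bound both triangle-inequality terms by $\|f-p\|_\infty$ to get $2\|f-p\|_\infty\le 8\omega(f,C\sqrt{d}/n)$. The only difference is that you make the aliasing computation explicit, which is a fine addition but not a new argument.
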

\begin{proof}
  Let $p$ be the trigonometric polynomial of order $n$ guaranteed by
  Theorem~\ref{thm:apx}. By an elementary calculation, for any $\xi
  \in \Z^d$ such that $\|\xi\|_\infty \le n$, we have
  $\tilde{p}(\xi,2n+1) = \hat{p}(\xi) = c_\xi$, where $c_\xi$ is the
  coefficient of $e^{2\pi i \langle \xi, \cdot \rangle}$ in the
  expansion of $p$. For any $\xi$ as above, this gives us:
  \begin{align*}
  |\tilde{p}(\xi, 2n+1) - \hat{f}(\xi)| 
  &= |\hat{p}(\xi) - \hat{f}(\xi)|\\
  &= \left|\int_{[0,1)^d}{(p(x) - f(x)) e^{-2\pi i\langle \xi, x
        \rangle}dx}\right| \le \|f - p\|_\infty,
  \end{align*}
  where we used the trivial case of H\"older's inequality. Similarly,
  \[
  |\tilde{f}(\xi, 2n+1) - \tilde{p}(\xi,2n+1)| 
  = 
  \frac{1}{(2n+1)^d} \left| \sum_{q \in Q_n}{(f(x) - p(x)) e^{-2\pi i
        \langle \xi, q\rangle}}\right|
  \le \|f-p\|_\infty.
  \]
  Combining the two inequalities, and using the bound in
  Theorem~\ref{thm:apx}, we get
  \begin{align*}
  |\tilde{f}(\xi, 2n+1) - \hat{f}| &\le 
  |\tilde{f}(\xi, 2n+1) - \tilde{p}(\xi,2n+1)| +   |\tilde{p}(\xi,
  2n+1) - \hat{f}(\xi)|\\ 
  &\le 2\|f-p\|_\infty \le 8\omega\left(f,\frac{C\sqrt{d}}{n}\right).
  \end{align*}
  This completes the proof.
\end{proof}

In the next lemma we prove an explicit bound on how fast
$\tilde{f}_B(\xi,n)$ converges to $\hat{f}_B(\xi)$ with $n$ for a
convex polytope $B$. The bounds are most likely not tight, but
sufficient for our purposes, since we only need that the convergence
rate is polynomial in $\frac{1}{n}$. 

\begin{lemma}\label{lm:fourier-conv-polytope}
  Let $B \subseteq [0,1)^d$ be a convex polytope with non-empty
  interior. Then, for any $\xi \in \Z^d$, $\|\xi\|_\infty \le n$, we
  have
  \[
  |\tilde{f}_B(\xi, 2n+1) - \hat{f}_B(\xi)| =
  O_{d,B}\left(\frac{1}{\sqrt{n}}\right). 
  \]
\end{lemma}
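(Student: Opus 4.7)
The plan is to reduce this to Lemma~\ref{lm:fourier-conv-cont} by smoothing $f_B$, since $f_B$ itself is discontinuous and has infinite modulus of continuity. Specifically, I would introduce a small parameter $\varepsilon > 0$ (to be optimized later) and construct a continuous, periodic, $[0,1]$-valued ``tent'' function $g = g_\varepsilon$ satisfying: $g \equiv 1$ on the inward $\varepsilon$-contraction of $B$, $g \equiv 0$ outside the outward $\varepsilon$-expansion of $B$, and $g$ is $(1/\varepsilon)$-Lipschitz (for instance, $g(x) = \max(0, 1 - \mathrm{dist}(x, B^{-\varepsilon})/\varepsilon)$ truncated at $1$, then made periodic). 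In particular, $\omega(g,\delta) \le \delta/\varepsilon$, and $f_B - g$ is supported in the shell $S_\varepsilon \eqdef \{x \in [0,1)^d : \mathrm{dist}(x, \partial B) \le \varepsilon\}$.

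Next, I would bound the three pieces of the triangle inequality
\[
|\tilde f_B(\xi, 2n+1) - \hat f_B(\xi)| \le |\tilde f_B(\xi, 2n+1) - \tilde g(\xi,2n+1)| + |\tilde g(\xi,2n+1) - \hat g(\xi)| + |\hat g(\xi) - \hat f_B(\xi)|.
\]
For the middle term, Lemma~\ref{lm:fourier-conv-cont} applied to $g$ gives a bound of $8\omega(g, C\sqrt d/n) = O_d(1/(n\varepsilon))$. For the last term, $|\hat g(\xi) - \hat f_B(\xi)| \le \|g - f_B\|_1 \le \lambda_d(S_\varepsilon)$, and since $\partial B$ is a finite union of pieces of hyperplanes, a Steiner-type estimate (or a direct calculation on each facet) gives $\lambda_d(S_\varepsilon) = O_{d,B}(\varepsilon)$.

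The main obstacle, and the step I would spend the most care on, is the first term, because it involves the average of $|f_B - g|$ over the structured lattice $Q_{2n+1}^d$ rather than over $[0,1)^d$. Here I would bound
\[
|\tilde f_B(\xi, 2n+1) - \tilde g(\xi,2n+1)| \le \frac{1}{(2n+1)^d}\,\#\{q \in Q_{2n+1}^d : q \in S_\varepsilon\},
\]
and then estimate the lattice point count facet by facet. A facet of $B$ lies in a hyperplane $\{x : \langle \nu, x\rangle = c\}$ with unit normal $\nu$ depending only on $B$, so its $\varepsilon$-neighborhood is contained in a slab of width $2\varepsilon$ in direction $\nu$. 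Fixing all but the coordinate $j_i$ with the largest $|\nu_i|$ (which is bounded below by a constant $c_B > 0$) and counting integer values of $j_i \in \{0,\dots,2n\}$ with $\sum j_k \nu_k / (2n+1) \in [c-\varepsilon, c+\varepsilon]$, one gets at most $O_B(\varepsilon (2n+1) + 1)$ choices for $j_i$ per fixed choice of the remaining $d-1$ coordinates. Summing over facets yields a total lattice point count of $O_{d,B}(\varepsilon(2n+1)^d + (2n+1)^{d-1})$, so the first term is $O_{d,B}(\varepsilon + 1/n)$.

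Combining the three estimates gives $|\tilde f_B(\xi, 2n+1) - \hat f_B(\xi)| = O_{d,B}(\varepsilon + 1/n + 1/(n\varepsilon))$. Choosing $\varepsilon = 1/\sqrt n$ balances the first and third terms and yields the claimed $O_{d,B}(1/\sqrt n)$ bound (the $1/n$ term being absorbed). The dependence on $B$ enters only through the normals and number of facets of $B$, together with the constant in the tubular volume bound, all of which are constants once $d$ and $B$ are fixed.
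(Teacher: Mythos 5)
Your proposal is correct and follows essentially the same route as the paper: replace $f_B$ by a continuous periodic approximation with modulus of continuity $O(\delta/\eps)$, apply Lemma~\ref{lm:fourier-conv-cont} to it, bound the remaining continuous and discrete errors by the Lebesgue measure and the grid-point count of the $O(\eps)$-shell around $\partial B$, and optimize with $\eps = n^{-1/2}$. The only differences are in implementation details: the paper builds the mollifier from the gauge function of $B$ about an interior point and counts grid points in the shell by a ball-packing volume argument using the inscribed ball radius, while you use the Euclidean distance function and a facet-by-facet slab count, and both executions yield the same $O_{d,B}(\eps + 1/n)$ and $O_{d,B}(1/(n\eps))$ error terms.
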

\begin{proof}
  In order to apply Lemma~\ref{lm:fourier-conv-cont}, we need a
  continuous function $f$ which approximates $f_B$. We construct this
  approximation by, roughly, a piecewise linear interpolation along
  every direction.

  Let $c$ lie in the interior of $B$, and pick a real number $r > 0$
  such that $c+ rD^d \subseteq B$, where $D^d$ is the unit Euclidean
  ball in $\R^d$ centered at the origin. Let's define the gauge
  function $g$ by $g(x) = \inf\{t: x \in (1-t)c + tB\}$ on
  $[0,1)^d$. Note that $g(x) \le \frac{1}{r}\|x-c\|_2$, further, that
  for any $x,t\in \R^d$, $g(x+t) \le g(x) + g(t+c)$. Using these two
  observations, we get
  \[
  g(x+t) - g(x) \le g(t+c) \le \frac{1}{r}\|t\|_2.
  \]
  By symmetry, we also get that $g(x) - g(x+t) \le
   \frac{1}{r}\|t\|_2$. Therefore, $\omega(g, \delta) \le
   \frac{\delta}{r}$.  

   We define a periodic function $f$ on $[0,1)^d$ by
   \[
   f(x) =
   \begin{cases}
     1 & g(x) \le 1-\eps \\
     \frac{1}{\eps} (1-g(x)) & 1-\eps < g(x) <1 \\
     0 & g(x) \ge 1\\
   \end{cases},
   \]
   for a parameter $0 < \eps < 1$ which depends on $n$ and will be
   determined later. We then extend $f$ periodically to the rest of
   $\R^d$.
   The function $f$ is defined so that it is
   continuous and agrees with $f_B$ except for those $x$ for which
   $1-\eps < g(x) < 1$. Moreover, observe that
   \[
   \omega(f,\delta) \le \frac{1}{\eps} \omega(g,\delta) \le \frac{\delta}{r\eps}.
   \]
   Then, by Lemma~\ref{lm:fourier-conv-cont}, for any $\xi \in \Z^d$
   such that
   $\|\xi\|_\infty \le n$, we have
   \begin{equation}
   |\tilde{f}(\xi, 2n+1) - \hat{f}(\xi)| \le 
   \frac{8C\sqrt{d}}{r\eps n} \label{eq:apx-err}
   \end{equation}
   
   \begin{figure}[htp]
     \centering
     \includegraphics[scale=0.7]{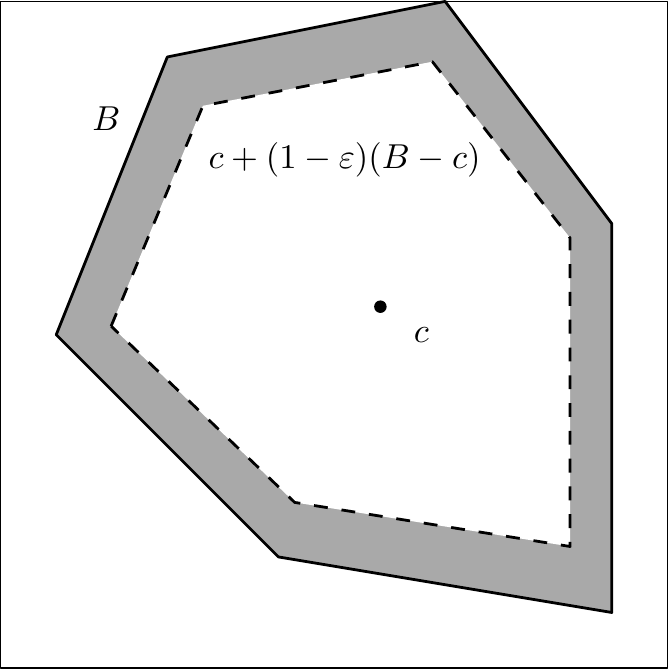}
     \caption{The set $S$ on which $f$ and $f_B$ disagree.}
     \label{fig:diff-set}
   \end{figure}

   It remains to bound $|\hat{f}_B(\xi) - \hat{f}(\xi)|$ and
   $|\tilde{f}_B(\xi, 2n+1) - \tilde{f}(\xi, 2n+1)|$. Let $S = \{x \in
   [0,1)^d: 1-\eps < g(x) < 1\}$ be the subset of $[0,1)^d$ on which
   $f$ and $f_B$ disagree. Notice that the closure of $S$ is $B
   \setminus (\eps c + (1-\eps)B) = c + (B-c) \setminus
   (1-\eps)(B-c)$ (see Figure~\ref{fig:diff-set}), so we have
   \[
   \lambda_d(S) = \lambda(B \setminus (1-\eps)B) 
   = (1 - (1-\eps)^d)\lambda_d(B) \le d\eps,
   \]
   where, in the final inequality, we used the assumption $B \subseteq
   [0,1)^d$, which implies $\lambda_d(B) \le 1$. 
   (Recall that we use $\lambda_d$ for the Lebesgue measure in
   $\R^d$.) We can now bound the first of our error terms using
   H\"older's inequality: for any $\xi \in \Z^d$
   \begin{align}
   |\hat{f}_B(\xi) - \hat{f}(\xi)| &= 
   \left|\int_{[0,1)^d}{(f_B(x) - f(x)) e^{-2\pi i\langle \xi, x
         \rangle}dx}\right|\notag\\
   &=   \left|\int_{S}{(f_B(x) - f(x)) e^{-2\pi i\langle \xi, x \rangle}dx}\right|
   \le \lambda_d(S) \le d \eps. \label{eq:cft-err}
   \end{align}
   A similar calculation for the discrete Fourier coefficients gives
   us
   \begin{align*}
   |\tilde{f}_B(\xi, 2n+1) - \tilde{f}(\xi, 2n+1)| &= 
   \frac{1}{(2n+1)^d} \sum_{q \in Q_n}{(f_B - f(x)) e^{-2\pi i  \langle   \xi, q\rangle}}\\
   &= \frac{1}{(2n+1)^d} \sum_{q \in Q_n \cap S}{(f_B - f(x)) e^{-2\pi i   \langle   \xi, q\rangle}}\\
   &\le \frac{|S \cap Q_n|}{(2n+1)^d}
   \end{align*}
   for any $\xi \in \Z^d$.
   By a standard volume argument, 
   \[
   |S \cap Q_n| \le 
   \frac{\lambda_d(S  + \frac{1}{2n}D^d)}{\lambda_d(\frac{1}{2n}D^d)}
   = (2n)^d \frac{\lambda_d(S  + \frac{1}{2n}D^d)}{\lambda_d(D^d)}.
   \]
   Because $\frac{1}{2n}D^d \subseteq \frac{1}{2rn}(B-c)$, we have $S +
   \frac{1}{2n}D^d \subseteq c + (1+\frac{1}{2rn})(B-c)\setminus(1-\eps
   - \frac{1}{2rn})(B-c)$, and, therefore,
   \[
   \lambda_d\left(S + \frac{1}{2n}D^d\right) 
   \le 
   \left(\left(1 + \frac{1}{2rn}\right)^d -  \left(1-\eps-\frac{1}{2rn}\right)^d\right)\lambda_d(B)
   \le 
   \eps d + \frac{3d}{2rn}.
   \]
   where the final inequality holds for $n \ge \frac{d}{2r}$.
   Putting the estimates together, we have
   \begin{equation}
   |\tilde{f}_B(\xi, 2n+1) - \tilde{f}(\xi, 2n+1)|
   \le 
   C_d \eps + \frac{C_d}{rn}, \label{eq:dft-err}
   \end{equation}
   for a constant $C_d$ depending on $d$ and all large enough $n$.

   Combining \eqref{eq:apx-err}, \eqref{eq:cft-err}, and
   \eqref{eq:dft-err}, for all large enough $n$ and any $\xi \in \Z^d$
   such that $\|\xi\|_\infty \le n$ we get
   \[
   |\tilde{f}_B(\xi, 2n+1) - \hat{f}_B(\xi, 2n+1)|
   \le \frac{8C\sqrt{d}}{r\eps n} + (C_d + d)\eps + \frac{C_d}{rn}.
   \]
   By setting $\eps = n^{-1/2}$, we get that the right hand side is in
   $O_{d,B}(n^{-1/2})$, as required. 
\end{proof}

Inspecting the proof of Lemma~\ref{lm:fourier-conv-polytope}, we see
that the only dependence on $B$ is via the radius $r$ of the Euclidean
ball contained in $B$: all other constants depend on the dimension
only. By John's theorem, we can apply an affine transformation to $B$
so that it is contained in $[0,1)^d$, and in fact in a Euclidean ball
of unit radius, and contains a ball of radius $\frac{1}{d}$. However,
the estimates we use below on the Fourier coefficients of $f_B$ do
depend on $B$, so we do not pursue this idea further.

We require a technical definition. 

\begin{definition}\label{defn:non-degenerate}
We will say that a polytope $B$ in $\R^d$ is \emph{generic} if
there exists a sequence of faces $F_1 \subseteq F_3 \subseteq \ldots
\subseteq F_{d-1}$ of $B$ such that $F_j$ is a face of dimension $j$,
for every $j \le d-2$ the face $F_j$ is not parallel to any other face
of $F_{j+1}$, and the facet $F_{d-1}$ is not parallel to any other
facet of $B$. 
\end{definition}

It is easy to see that $B$ is generic for
example if the facets of $B$ have normal vectors $u_1, \ldots, u_k$
that are in \emph{general position}, in the sense that for every $J
\subseteq [k]$ of size at most $d$ the set of vectors $\{u_j: j \in
J\}$ spans a subspace of dimension $|J|$. However, the condition of
being generic appears to be much weaker. We use the following
estimate on the Fourier coefficients of such a polytope.

\begin{theorem}[\cite{BrandoliniCT97}]\label{thm:fourier-estimate}
  Let $B$ be a generic polytope in $\R^d$ for $d \ge 2$. There exists
  a constant $c_{d,B}$, possibly depending on $d$ and $B$, such that
  for any $\rho \ge 1$
  \[
  \int_{S^{d-1}}{|\hat{f}_{B}(\rho\xi)| d\sigma_{d-1}(\xi)}
  \ge
  \frac{c_{d,B}\log^{d-1}(\rho)}{\rho^d} ,
  \]
  where $S^{d-1}$ is the unit Euclidean sphere in $\R^d$ and
  $\sigma_{d-1}$ is the uniform probability measure on $S^{d-1}$.
\end{theorem}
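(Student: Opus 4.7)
The plan is to extract the $d-1$ logarithms from the average $\int_{S^{d-1}} |\hat{f}_B(\rho\xi)|\, d\sigma_{d-1}(\xi)$ by first writing $\hat{f}_B$ explicitly as a sum over flags of faces of $B$, and then isolating a dyadic family of regions on the sphere where the contribution of one specific flag dominates. Iterated application of the divergence theorem gives a representation of the form
\[
\hat{f}_B(\xi) = \sum_{F_0 \subset F_1 \subset \ldots \subset F_{d-1}}
\frac{c_F \, e^{-2\pi i \langle F_0, \xi\rangle}}{(2\pi i)^d \prod_{j=0}^{d-1} \langle \xi, n_j^F\rangle},
\]
where the sum is over complete flags of faces of $B$ with $\dim F_j = j$, $n_j^F$ is a unit normal to $F_j$ within the linear span of $F_{j+1}$ (setting $F_d = B$), and $c_F$ is a bounded combinatorial coefficient. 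By construction $\{n_0^F, \ldots, n_{d-1}^F\}$ is an orthonormal basis of $\R^d$, so each summand evaluated at $\rho\xi$ has modulus at most $C \rho^{-d}\prod_j |\langle \xi, n_j^F\rangle|^{-1}$.

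Using the genericity hypothesis, I would extend the chain $F_1^* \subset \ldots \subset F_{d-1}^*$ into a flag by choosing a vertex $F_0^* \in F_1^*$, and set $n_j \eqdef n_j^{F^*}$. Then decompose $S^{d-1}$ into dyadic tubes
\[
\Omega_{k_1, \ldots, k_{d-1}} \eqdef
\bigl\{\xi \in S^{d-1} : 2^{-k_j-1} \le |\langle \xi, n_j\rangle| \le 2^{-k_j},\ 1 \le j \le d-1\bigr\},
\]
indexed by $(k_1, \ldots, k_{d-1})$ with $1 \le k_j \le \tfrac12\log_2\rho$. Linear independence of $\{n_1, \ldots, n_{d-1}\}$ yields $\sigma_{d-1}(\Omega_{k_1,\ldots,k_{d-1}}) \asymp \prod_{j=1}^{d-1} 2^{-k_j}$, and on such a tube $|\langle \xi, n_0\rangle|$ is bounded below by an absolute constant, so the summand at $F^*$ has modulus $\gtrsim \rho^{-d}\prod_{j=1}^{d-1} 2^{k_j}$ pointwise.

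The main obstacle is showing that this dominant term is not cancelled by the contributions of other flags; this is exactly where the genericity hypothesis enters. For any flag $F \ne F^*$, locating the smallest level $j$ at which the two flags differ produces a normal $n_j^F$ that is not a scalar multiple of $n_j$, and expanding $n_j^F$ in the basis $\{n_0, \ldots, n_{d-1}\}$ shows that $|\langle \xi, n_j^F\rangle|$ exceeds $|\langle \xi, n_j\rangle|$ on $\Omega_{k_1, \ldots, k_{d-1}}$ by a factor that more than compensates for any gain of $F$ over $F^*$ in the remaining coordinates. Since the total number of flags is bounded in terms of $B$, summing these losses shows that the $F^*$-summand dominates all competitors on a positive-measure fraction of each tube, giving a pointwise lower bound $|\hat{f}_B(\rho\xi)| \gtrsim \rho^{-d}\prod_{j=1}^{d-1} 2^{k_j}$ there. (No phase cancellation can reduce a single summand's own magnitude, so the oscillating factor $e^{-2\pi i \langle F_0^*, \rho\xi\rangle}$ poses no issue.) Integrating over each tube yields a contribution of order $\rho^{-d}$, and summing over the $\Theta(\log^{d-1}\rho)$ admissible index tuples gives the claimed lower bound.
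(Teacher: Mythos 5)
First, a point of reference: the paper does not prove this statement at all --- it is quoted from Brandolini, Colzani, and Travaglini \cite{BrandoliniCT97}, and the author only remarks that their proof proceeds by induction on lower-dimensional faces, which is where the genericity hypothesis is used. So your attempt is necessarily a from-scratch proof, and it has a genuine gap at its core.

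The central problem is the flag-sum formula and the pointwise lower bound you derive from it. Iterating the divergence theorem does give a sum over complete flags, but each reduction step from a $j$-face to a $(j-1)$-face divides by $\|P_{V_j}\xi\|_2^2$, the squared norm of the projection of $\xi$ onto the direction space $V_j$ of $F_j$, and multiplies by $\langle \xi, n_{j-1}^F\rangle$. In the orthonormal basis $n_0^F,\ldots,n_{d-1}^F$ (that part of your setup is correct), writing $a_i = |\langle \xi, n_i^F\rangle|$ for $\xi \in S^{d-1}$, the modulus of a flag term at $\rho\xi$ is
\[
\frac{1}{(2\pi\rho)^d}\ \prod_{j=1}^{d} \frac{a_{j-1}}{a_0^2+\cdots+a_{j-1}^2},
\]
not $(2\pi\rho)^{-d}\prod_j a_j^{-1}$: the inner products sit in the numerators. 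Your stated formula happens to give a valid \emph{upper} bound on each term (since $a_{j-1}/(a_0^2+\cdots+a_{j-1}^2)\le 1/a_{j-1}$), but the \emph{lower} bound you need for $F^*$ fails on your tubes. On $\Omega_{k_1,\ldots,k_{d-1}}$ you force $a_0 \approx 1$ and $a_j \approx 2^{-k_j}$ for $j\ge 1$, so every partial sum $a_0^2+\cdots+a_{j-1}^2$ is $\approx 1$ and the $F^*$ term has modulus $\approx \rho^{-d}\prod_{j\ge1} 2^{-k_j}$ --- small, not $\gtrsim \rho^{-d}\prod_{j\ge1}2^{k_j}$. The case $d=2$ makes the inversion visible: a direction nearly parallel to an edge (your tube) sees almost no Fourier mass from that edge, of order $\rho^{-2}2^{-k}$, while the $\log\rho$ in $\int_{S^1}|\hat f_B(\rho\xi)|\,d\sigma$ comes from directions within angle $2^{-k}$ of the \emph{facet normal}, where the modulus is $\approx \rho^{-2}2^{k}$. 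In general the mass producing $\log^{d-1}(\rho)/\rho^d$ lives near the facet normal $n_{d-1}$, with a hierarchy of small tangential components $a_0\lesssim a_1 \lesssim \cdots \lesssim a_{d-2}$, so that each partial sum is dominated by its top term; with your tubes the total contribution is only $O(\rho^{-d})$ and no logarithms appear. The geometry of your decomposition is therefore exactly backwards, and the main mechanism of the proof collapses.

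Two further gaps would remain even after reorienting the tubes. First, the cross-flag cancellation step is asserted rather than proved, and this is precisely where genericity has to do real work: the estimate must hold for every fixed $\rho\ge1$, competing flags can share the same vertex $F_0$ (so there is no relative phase to exploit), and flags attached to parallel faces can genuinely cancel --- the paper itself notes that the estimate fails for some $\rho$ when $B$ is a cube, so any argument that never uses non-parallelism quantitatively cannot be correct. Handling this is the heart of the induction on faces in \cite{BrandoliniCT97}. Second, a minor point: with only $k_j\ge 1$ the constraint $\sum_j \langle\xi,n_j\rangle^2=1$ does not force $|\langle\xi,n_0\rangle|$ to be bounded below once $d\ge 5$; you need $k_j$ to start above a dimension-dependent threshold (harmless, but it should be said).
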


In~\cite{BrandoliniCT97} Theorem~\ref{thm:fourier-estimate} is stated
for a simplex, and after the proof the authors remark that the same
proof extends to any polytope which has one face not parallel to any
other face. However, in their proof, they induct on lower and lower
dimensional faces, and this condition needs to hold for any face they
induct on. Our definition of generic appears to be the minimal
condition under which their proof goes through. The authors
of~\cite{BrandoliniCT97} note that some genericity-like
condition is necessary since Theorem~\ref{thm:fourier-estimate} does
not hold for all values of $\rho$ when $B$ is a cube. Nevertheless, it
is likely that a variant of the theorem in which we average over
values of $\rho$ in a big enough interval could hold for arbitrary
polytopes, and may yield a lower bound with a constant that only
depends on $d$ and not on $B$.  We leave this extension for future
work.

Theorem~\ref{thm:fourier-estimate},
Lemma~\ref{lm:fourier-conv-polytope}, and an averaging argument yield
the following estimate on discrete Fourier coefficients, which is the
final step towards our lower bound.

\begin{lemma}\label{lm:nuclear}
  Let $B \subseteq c + D^d$ be a generic convex polytope with
  non-empty interior, where $D^d$ is the Euclidean unit ball in $\R^d$
  centered at the origin and $c$ is the centroid of $[0,1)^d$. There
  exists an orthogonal transformation $u$ such that $B_u \eqdef c + u(B-c)$
  satisfies
  \[
  \sum_{\xi \in \Z^d: \|\xi\|_\infty \le n}{|\tilde{f}_{B_u}(\xi,
    2n+1)|}
  = \Omega_{d,B}(\log^d(n)).
  \]
\end{lemma}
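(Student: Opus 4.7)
The plan is to combine Theorem~\ref{thm:fourier-estimate} (which lower-bounds the spherical average of $|\hat{f}_B|$) with Lemma~\ref{lm:fourier-conv-polytope} (which controls the convergence of discrete to continuous coefficients) via an averaging argument over the orthogonal group $\mathbf{O}(d)$.

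First I would verify, by a change of variable in the defining integral, that $|\hat{f}_{B_u}(\xi)| = |\hat{f}_B(u^{-1}\xi)|$ for all $\xi \in \Z^d$ and all $u \in \mathbf{O}(d)$; the phase coming from the centroid $c$ cancels in modulus. Since the Haar measure $\theta_d$ is invariant under inversion and its action on $\R^d\setminus\{0\}$ is transitive on each sphere, the vector $u^{-1}\xi$ is distributed uniformly on the sphere of radius $\|\xi\|_2$ when $u\sim\theta_d$. Combining this with Theorem~\ref{thm:fourier-estimate} applied to $B$ yields, for every nonzero $\xi \in \Z^d$,
\[
\int_{\mathbf{O}(d)} |\hat{f}_{B_u}(\xi)|\,d\theta_d(u)
 = \int_{S^{d-1}} |\hat{f}_B(\|\xi\|_2\,\eta)|\,d\sigma_{d-1}(\eta)
 \ge \frac{c_{d,B} \log^{d-1}\|\xi\|_2}{\|\xi\|_2^d}.
\]
Next, fix a truncation level $m = \lfloor n^{1/(2d)}\rfloor$ and sum the inequality above over $\xi$ with $0 < \|\xi\|_\infty \le m$. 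Decomposing into dyadic shells $2^{k-1} \le \|\xi\|_2 < 2^k$ for $k$ up to $\Theta(\log m)$, each shell contains $\Theta_d(2^{kd})$ lattice points and each contributes at least $\Omega_{d,B}(k^{d-1}/2^{kd})$, so each shell contributes $\Omega_{d,B}(k^{d-1})$ and summing in $k$ gives $\Omega_{d,B}(\log^d m) = \Omega_{d,B}(\log^d n)$. An averaging (pigeonhole) argument then produces a specific orthogonal transformation $u$ for which $\sum_{0<\|\xi\|_\infty \le m} |\hat{f}_{B_u}(\xi)| = \Omega_{d,B}(\log^d n)$.

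To finish, I would invoke Lemma~\ref{lm:fourier-conv-polytope} for $B_u$: for every $\xi$ with $\|\xi\|_\infty \le n$, $|\tilde{f}_{B_u}(\xi,2n+1) - \hat{f}_{B_u}(\xi)| = O_{d,B}(n^{-1/2})$. Since the truncated sum ranges over at most $(2m+1)^d = O(\sqrt{n})$ lattice points, the cumulative approximation error is $O_{d,B}(1)$, and therefore
\[
\sum_{\|\xi\|_\infty \le n} |\tilde{f}_{B_u}(\xi,2n+1)|
 \ge \sum_{0<\|\xi\|_\infty \le m} |\hat{f}_{B_u}(\xi)| - O_{d,B}(1)
 = \Omega_{d,B}(\log^d n),
\]
as desired. (One minor technicality is to ensure $B$ and $B_u$ sit inside $[0,1)^d$ after the rotation, which is why the statement works with the ball $c + D^d$ centered at the centroid, and the construction is preserved under the rotation.)

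The main obstacle is calibrating the truncation $m$. The bound in Lemma~\ref{lm:fourier-conv-polytope} is only polynomial in $1/n$, while the total number of lattice points in $[-n,n]^d$ is $n^d$, so a naive application of the lemma to all terms at once gives an error of order $n^{d-1/2}$, which dwarfs the main term $\log^d n$. The choice $m = n^{1/(2d)}$ is essentially forced: it is just large enough that $\log m = \Theta(\log n)$ (preserving the full $\log^d n$ main term from Theorem~\ref{thm:fourier-estimate}) and just small enough that $m^d\cdot n^{-1/2} = O(1)$ (keeping the cumulative Fourier-approximation error bounded). The averaging over $\mathbf{O}(d)$ is also crucial, since the spherical Fourier bound is only an integral estimate and one cannot pointwise lower-bound $|\hat{f}_B(\xi)|$ for individual lattice $\xi$.
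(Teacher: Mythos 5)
Your proposal is correct and follows essentially the same route as the paper: average $|\hat{f}_{B_u}(\xi)|$ over $\mathbf{O}(d)$ to invoke the spherical estimate of Theorem~\ref{thm:fourier-estimate}, truncate at radius $\sim n^{1/(2d)}$ so that the $O_{d,B}(n^{-1/2})$ error from Lemma~\ref{lm:fourier-conv-polytope} is negligible, pigeonhole for a good rotation $u$, and sum over lattice shells to get $\Omega_{d,B}(\log^d n)$. The only differences are cosmetic (dyadic shells and an aggregate $O_{d,B}(1)$ error bound, versus the paper's summation by parts and pointwise comparison of the error with the per-coefficient main term), so no further comment is needed.
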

\begin{proof}
  Recall that we use $\mathbf{O}(d)$ to denote the group of orthogonal
  transformations on $\R^d$, and $\theta_d$ to denote the Haar
  probability measure on this group. We also use $\sigma_{d-1}$ for
  the uniform probability measure on the sphere. 

  For any $\xi \in \Z^d\setminus \{0\}$ we have
  \begin{align*}
  \int_{\mathbf{O}(d)}|\hat{f}_{B_u}(\xi)|d\theta(u)
  &=
  \int_{\mathbf{O}(d)}\left|\int_{c+u(B-c)}{e^{-2\pi i\langle \xi,
        x \rangle}dx}\right|d\theta(u)\\
  &=
  \int_{\mathbf{O}(d)}\left|\int_{u(B)}{e^{-2\pi i\langle \xi,
        x \rangle}dx}\right|d\theta(u)\\
  &=
  \int_{\mathbf{O}(d)}\left|\int_{B}{e^{-2\pi i\langle \xi,
        u^*(x) \rangle}dx}\right|d\theta(u)\\
  &=
  \int_{\mathbf{O}(d)}\left|\int_{B}{e^{-2\pi i\langle u(\xi),
        x \rangle}dx}\right|d\theta(u)\\
  &=
  \int_{S^{d-1}}\left|\int_{B}{e^{-2\pi i\langle \|\xi\|_2\zeta,
        x \rangle}dx}\right|d\sigma_{d-1}(\zeta)\\
  &\ge
  \frac{c\log^{d-1}(\|\xi\|_2)}{\|\xi\|_2^d}.
  \end{align*}
  Above, in the penultimate line we used the fact that for any
  measurable set $Y \subseteq S^{d-1}$, and any $y \in S^{d-1}$,
  $\sigma_{d-1}(Y) = \theta_d(\{u \in \mathbf{O}(d): u(y) \in
  Y\})$. The final inequality is implied by
  Theorem~\ref{thm:fourier-estimate} for an appropriate constant $c$
  depending on $d$ and $B$. Therefore,
  \[
  \int_{\mathbf{O}(d)}\left(
    \sum_{\xi \in \Z^d: 0 < \|\xi\|_2 = j}{|\hat{f}_{B_u}(\xi)|}\right)d\theta_d(u) 
  \ge c {m_j \frac{\log^{d-1}(j)}{j^{d/2}}},
  \]
  where $m_j = \{\xi \in \Z^d: \|\xi\|_2^2 = j\}$. By
  Lemma~\ref{lm:fourier-conv-polytope}, for all sufficiently large $n$
  and for all $j$  such that $j \le (c_1n)^{1/d}$ for a
  sufficiently small constant $c_1$, we have $\tilde{f}_{B_u}(\xi,
  2n+1) \ge \hat{f}_{B_u}(\xi) - \frac{c}{2j^{d/2}}$ for any $\xi \in
  \N_0^d$ for which $\|\xi\|_2^2 = j$. Therefore, 
  \[
  \int_{\mathbf{O}(d)}\left(
    \sum_{\xi \in \Z^d: 0<\|\xi\|_2^2 \le (c_1n)^{1/d}}
    {|\tilde{f}_{B_u}(\xi, 2n+1)|}\right)d\theta_d(u) 
  \ge c \sum_{j = 1}^{\lfloor (c_1n)^{1/d} \rfloor} {m_j
    \frac{\log^{d-1}(j)}{2j^{d/2}}}. 
  \]
  There must then exist a choice of $u \in \mathbf{O}(d)$ such that
  \begin{equation}\label{eq:fourier-cont-lb}
    \sum_{\xi \in \N_0^d: \|\xi\|_2^2 \le (c_1n)^{1/d}}
    {|\tilde{f}_{B_u}(\xi, 2n+1)|}
    \ge c \sum_{j = 1}^{\lfloor (c_1n)^{1/d} \rfloor} 
    {m_j \frac{\log^{d-1}(j)}{2j^{d/2}}}. 
  \end{equation}
  Let us fix such a $u$ for the rest of the proof. We proceed to
  estimate the right hand side of~\eqref{eq:fourier-cont-lb}. Define
  $\ell = \lfloor (c_1n)^{1/d} \rfloor$ to be the upper bound of the
  summation, and let $k = \lfloor \sqrt{\ell} \rfloor$. Let $m_{\le j}
  = m_1 + \ldots + m_j$. Using summation by parts, we have
  \begin{align*}
  \sum_{j = 1}^{\ell}{m_j   \frac{\log^{d-1}(j)}{j^{d/2}}}
  &\ge
  \log^{d-1}(k)\sum_{j = k}^{\ell}{\frac{m_j}{j^{d/2}}}\\
  &= 
  \log^{d-1}(k)  \frac{m_{\le \ell}}{\ell^{d/2}}
  +
  \log^{d-1}(k)
  \sum_{j = k}^{\ell-1}{m_{\le j}\left(\frac{1}{j^{d/2}} - \frac{1}{(j+1)^{d/2}}\right)}.
  \end{align*}
  By standard estimates (e.g.~Minkowski's first theorem), there exists
  a constant $c_2$ depending on the dimension $d$ such that $m_{\le j}
  \ge c_2 j^{d/2}$. By convexity, $\frac{1}{j^{d/2}} -
  \frac{1}{(j+1)^{d/2}} \ge \frac{d/2}{(j+1)^{(d+2)/2}}$. Plugging
  these inequalities into the bound above, we get that 
  \[
  \sum_{j =  1}^{\ell}{m_j   \frac{\log^{d-1}(j)}{j^{d/2}}} 
  \ge 
  c_2 \log^{d-1}(k) \sum_{j = k}^\ell{\frac{j^{d/2}}{(j+1)^{(d+2)/2}}}
  = \Omega_d(\log^d n).
  \]
 Together with \eqref{eq:fourier-cont-lb}, this completes the
  proof of the lemma. 
\end{proof}

\begin{proof}[Proof of Theorem~\ref{thm:lb-main}]
  By scaling we can assume that $B \subseteq c + D^d$, where 
  $D^d$ is the Euclidean unit ball in $\R^d$ centered at the origin
  and $c$ is the centroid of $[0,1)^d$. Let then $u$ be the orthogonal
  transformation given by Lemma~\ref{lm:nuclear}, and let $M \eqdef
  M(B_u, 2n+1)$. It is easy to verify that $M$ is diagonalized by the
  collection of orthogonal eigenvectors $\{(e^{2\pi i\langle \xi,
    x\rangle})_{x \in Q_{2n+1}^d}: \xi \in \Z^d, \|\xi\|_\infty \le
  n\}$, and the eigenvalue associated with the eigenvector $(e^{2\pi
    i\langle \xi, x\rangle})_{x \in Q_{2n+1}^d}$ is $(2n+1)^d
  \tilde{f}_{B_u}(\xi, 2n+1)$. Since $M$ is a normal matrix,
  i.e.~$M^\intercal M = MM^\intercal$, or, equivalently, since it is
  diagonalized by a system of orthogonal eigenvectors, its singular
  values are equal to the absolute values of its eigenvalues. By
  Lemma~\ref{lm:nuclear} we have the estimate
  \begin{align}
    \gamma_2(M) &\ge \frac{1}{(2n+1)^d} \|M\|_{\tr}\notag\\
    &=   \sum_{\xi \in \Z^d: \|\xi\|_\infty \le n}{|\tilde{f}_{B_u}(\xi,
    2n+1)|}
  = \Omega_{d,B}(\log^d(n)).    \label{eq:gamma2-M-lb}
  \end{align}
  Let $A$ be the incidence matrix of $\TT_{B_u}(-Q_{2n+1})$. Notice
  that every row of $M$ can be represented as the disjoint sum of at
  most $2^d$ rows of $A$. Therefore, we can write $M = A_1 + \ldots +
  A_k$, where $k \le 2^d$, and each row of each matrix $A_j$ is also a
  row of $A$. Since duplication and rearrangement of rows preserve
  $\gamma_2$, and dropping rows does not increase it, by the triangle
  inequality for $\gamma_2$ we have
  \[
  \gamma_2(M) \le \sum_{j=1}^k \gamma_2(A_j) 
  \le 2^d\gamma_2(A) = 2^d \gamma_2(\TT_{B_u}(-Q_{2n+1})).
  \]
  Define the pointset $P = \{u^*(x-c) + c: x \in -Q_{2n+1}\}$, and
  notice that $\TT_B(P) = \TT_{B_u}(-Q_{2n+1})$, so, by the inequality
  above and \eqref{eq:gamma2-M-lb}, we have 
  \begin{equation}\label{eq:gamma2-lb}
  \gamma_2(\TT_B(P)) = \Omega_{d,B}(\log^d n). 
  \end{equation}
  Equations~\eqref{eq:lb}~and~\eqref{eq:gamma2-lb} imply
  \[
  \disc(n^d, \TT_B) \ge \herdisc \TT_B(P)
  = \Omega_{d,B}(\log^{d-1}n).
  \]
  Therefore, $\disc(n, \TT_B) = \Omega_{d,B}(\log^{d-1}n)$, as was to
  be proved.
\end{proof}

Equation~\eqref{eq:gamma2-lb} implies the lower bound on
$\sqrt{t_ut_q}$ in Theorem~\ref{thm:ds}. The lower bound on error in
Theorem~\ref{thm:dp} follows from equation~\eqref{eq:gamma2-lb} and
Theorem~\ref{thm:dp-gamma2}. 

\section*{Acknowledgements}

This research was partially supported by NSERC. The author would like to thank the organizers of the 2016 discrepancy theory workshop in Varenna, where some of the initial ideas in this paper were conceived. The author would also like to thank Jozsef Beck for suggesting the problem of extending the lower bound for Tusn\'ady's problem to arbitrary polytopes, and Kunal Talwar for some initial discussions of this problem. 

\bibliographystyle{alpha}
\bibliography{tusnady}
\end{document}